\documentclass[a4paper,12pt]{article}
\UseRawInputEncoding
\usepackage{amsmath}
\usepackage{ascmac}
\usepackage{amssymb}
\usepackage{multicol}
\usepackage{graphicx}
\usepackage[dvipdfmx]{color}
\usepackage{amscd}
\usepackage{url}
\usepackage{comment}
\usepackage{bm}
\usepackage{theorem}
\usepackage{latexsym}
\usepackage{mathrsfs}
\usepackage{framed,color}
\usepackage{fancybox}
\usepackage{ulem}
\usepackage[dvipdfmx]{hyperref}
\usepackage[all]{xy}
\theorembodyfont{\normalfont}
\newtheorem{theorem}{Theorem}[section]
\newtheorem{definition}[theorem]{Definition}
\newtheorem{proposition}[theorem]{Proposition}
\newtheorem{lemma}[theorem]{Lemma}
\newtheorem{corollary}[theorem]{Corollary}
\newtheorem{remark}[theorem]{Remark}
\newtheorem{example}[theorem]{Example}

\definecolor{shadecolor}{gray}{0.85}
\makeatletter
\newenvironment{proof}[1][\proofname]{\par
  \normalfont
  \topsep6\p@\@plus6\p@ \trivlist
  \item[\hskip\labelsep{\bfseries #1}\@addpunct{\bfseries}]\ignorespaces
}{
  \endtrivlist
}
\newcommand{\proofname}{\textit{Proof}.}

\newcommand{\ctext}[1]{\raise0.2ex\hbox{\textcircled{\scriptsize{#1}}}}

	\@addtoreset{equation}{section}

	\@addtoreset{figure}{section}

	\@addtoreset{table}{section}
	
\setlength{\oddsidemargin}{-2mm}
\setlength{\evensidemargin}{-2mm}
\setlength{\topmargin}{-10mm}
\setlength{\textwidth}{164mm}
\setlength{\textheight}{230mm}
\makeatother
\def\qed{\hfill$\Box$}
\title{Gauss-Bonnet type formulas for frontal bundles over surfaces with boundary and their applications}
\author{Kyoya Hashibori\footnote{hashibori.kyoya.a7@elms.hokudai.ac.jp}}
\date{Department of Mathematics, Graduate School of Science, Hokkaido University, Kita-10 Nishi-8, Kita-ku, Sapporo 060-0810, JAPAN}
\begin{document}

\maketitle

\abstract

We define a frontal bundle by imposing a compatibility condition on two types of coherent tangent bundles over a surface with boundary. Since it is known that there are two Gauss-Bonnet type formulas for coherent tangent bundles, we obtain four Gauss-Bonnet type formulas for frontal bundles. Furthermore, since two coherent tangent bundles over the surface are related to each other by the compatibility condition, an appropriate combination of these Gauss-Bonnet type formulas leads to new formulas that relate a frontal to its unit normal vector field using the Gaussian curvature, the Euler characteristic, etc. If the extrinsic curvature of a frontal is non-zero bounded, these formulas lead to formulas that relate the number of singular points to the Euler characteristic.

\section{Introduction}\label{sec1}

The well-known classical Gauss-Bonnet theorem is the following formula that explicitly describes the relationship between the Gaussian curvature $K$ of a regular surface $M$ and the Euler characteristic $\chi(M)$:
\begin{equation}
\int_MKdA=2\pi\chi(M).\label{1.1}
\end{equation}

Saji, Umehara, and Yamada generalized $(\ref{1.1})$ to frontals, which are surfaces that admit singular points, using the theory of coherent tangent bundles, where a coherent tangent bundle is a natural intrinsic formulation of frontals (see \cite{1,2,3,4,5,6}). Later, Domitrz and Zwierzy\'{n}ski generalized the theory of coherent tangent bundles to the case of surfaces with boundary, and they generalized the Gauss-Bonnet type formulas derived by Saji, Umehara, and Yamada to the case of surfaces with boundary (see \cite{1}).

Saji, Umehara, and Yamada defined a notion of a frontal bundle in \cite{2,4}. A frontal bundle is a structure obtained by combining two types of coherent tangent bundles over a surface under a compatibility condition. For example, we can construct a frontal bundle from a frontal and its unit normal vector field (see Example $\ref{ex3.2}$). In \cite{4}, four Gauss-Bonnet type formulas for frontal bundles over surfaces without boundary are derived, and by combining these formulas, several formulas are derived. For example, the Bleecker-Wilson formula relating the number of cusps of a Gauss map on a regular surface to the Euler characteristic of a subsurface such that the Gaussian curvature is negative is easily derived by a combination of Gauss-Bonnet type formulas (see \cite[Theorem $3.12$]{4}). Furthermore, applications are given for the case where the extrinsic curvature defined on the set of regular points of a frontal is non-zero bounded, and it is shown that the number of singular points and the Euler characteristic are affected by the positivity or negativity of the extrinsic curvature (see \cite[Theorem $3.28$, $3.30$]{4})．

Based on these previous studies, this paper generalizes the theory of frontal bundles to the case of surfaces with boundary, and presents formulas for the relationship between frontals and their unit normal vector fields. The following assertion is the main result.

\begin{theorem}
\label{thm1.1}

{\it Let $M$ be a compact oriented surface with boundary, $(N,g)$ a $3$ dimensional Riemannian manifold, $f:M\to N$ a co-orientable frontal, and $\nu$ a unit normal vector field of $f$. We suppose that $f$ and $\nu$ allow only singular points of the first kind and admissible singular points of the second kind, and that the set of singular points $\Sigma$ of $f$ and the set of singular points $\Sigma_\star$ of $\nu$ are transversal to the boundary $\partial{M}$. (Here, a singular point of $f$ (resp. $\nu$) is a point where the rank of the derivative $df$ of $f$ (resp. the covariant derivative $D\nu$ of $\nu$) is smaller than $2$ (see Example $\ref{ex3.2}$).) Then, we have the following four formulas:
\begin{itemize}
\item[$(1)$]$\displaystyle{4\pi\chi(M^-_\star)=2\int_{\Sigma}\kappa_sds+\int_{\partial{M}}\kappa_gds}$

\ \ \ $\displaystyle{-2\int_{M^-}Kd\widehat{A}-\sum_{p\in(\Sigma\cap\partial{M})^{\mathrm{null}}}\left(2\alpha^+(p)-\pi\right)}$

\ \ \ $\displaystyle{-\left(\int_{\partial{M}\cap M^+_\star}\kappa_g^\star ds_\star-\int_{\partial{M}\cap M^-_\star}\kappa_g^\star ds_\star\right)+2\pi\left(\#S^+_\star-\#S^-_\star\right)}$

\ \ \ $\displaystyle{-\pi\left(\#(\Sigma_\star\cap\partial{M})^{\mathrm{null}}+2\#(\Sigma_\star\cap\partial{M})^-\right)}$.
\item[$(2)$]$\displaystyle{4\pi\chi(M^-)=2\int_{\Sigma_\star}\kappa_s^\star ds_\star+\int_{\partial{M}}\kappa_g^\star ds_\star}$

\ \ \ $\displaystyle{-2\int_{M^-_\star}K_\star d\widehat{A}_\star-\sum_{p\in(\Sigma_\star\cap\partial{M})^{\mathrm{null}}}\left(2\alpha^+_\star(p)-\pi\right)}$

\ \ \ $\displaystyle{-\left(\int_{\partial{M}\cap M^+}\kappa_gds-\int_{\partial{M}\cap M^-}\kappa_gds\right)+2\pi\left(\#S^+-\#S^-\right)}$

\ \ \ $\displaystyle{-\pi\left(\#(\Sigma\cap\partial{M})^{\mathrm{null}}+2\#(\Sigma\cap\partial{M})^-\right)}$.
\item[$(3)$]$\displaystyle{\int_{\partial{M}\cap M^+}\kappa_gds-\int_{\partial{M}\cap M^-}\kappa_gds-2\pi\left(\chi(M^+)-\chi(M^-)\right)}$

\ \ \ $\displaystyle{-2\pi\left(\#S^+-\#S^-\right)-\pi\left(\#(\Sigma\cap\partial{M})^+-\#(\Sigma\cap\partial{M})^-\right)}$

$\displaystyle{=\int_{\partial{M}\cap M^+_\star}\kappa_g^\star ds_\star-\int_{\partial{M}\cap M^-_\star}\kappa_g^\star ds_\star-2\pi\left(\chi(M^+_\star)-\chi(M^-_\star)\right)}$

\ \ \ $\displaystyle{-2\pi\left(\#S^+_\star-\#S^-_\star\right)-\pi\left(\#(\Sigma_\star\cap\partial{M})^+-\#(\Sigma_\star\cap\partial{M})^-\right)}$.
\item[$(4)$]$\displaystyle{2\int_{\Sigma}\kappa_sds+\int_{\partial{M}}\kappa_gds-2\int_{M^-}Kd\widehat{A}-\sum_{p\in(\Sigma\cap\partial{M})^{\mathrm{null}}}\left(2\alpha^+(p)-\pi\right)}$

$\displaystyle{=2\int_{\Sigma_\star}\kappa_s^\star ds_\star+\int_{\partial{M}}\kappa_g^\star ds_\star-2\int_{M^-_\star}K_\star d\widehat{A}_\star-\sum_{p\in(\Sigma_\star\cap\partial{M})^{\mathrm{null}}}\left(2\alpha^+_\star(p)-\pi\right)}$.
\end{itemize}}
\end{theorem}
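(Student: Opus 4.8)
The plan is to split the frontal bundle into its two constituent coherent tangent bundles and apply the boundary Gauss--Bonnet theory to each. The frontal $f$ furnishes the ``tangential'' coherent tangent bundle $\mathcal{E}$ (with signed area density $\lambda$, singular set $\Sigma$, singular curvature $\kappa_s$, Gaussian curvature $K$, geodesic curvature $\kappa_g$ on $\partial M$, positive/negative parts $M^\pm$, peak singular points $S^\pm$, and the subsets $(\Sigma\cap\partial M)^{\mathrm{null}}$ and $(\Sigma\cap\partial M)^\pm$ of the boundary singular set), while the unit normal $\nu$ furnishes the ``normal'' coherent tangent bundle $\mathcal{E}_\star$ carrying the corresponding starred data. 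Since $f$ and $\nu$ admit only singular points of the first kind and admissible singular points of the second kind, and $\Sigma$, $\Sigma_\star$ are transversal to $\partial M$, both $\mathcal{E}$ and $\mathcal{E}_\star$ satisfy the hypotheses of the two known Gauss--Bonnet type formulas for coherent tangent bundles over surfaces with boundary. Applying those two formulas to each of $\mathcal{E}$ and $\mathcal{E}_\star$ produces the four ``raw'' Gauss--Bonnet identities for the frontal bundle that serve as the starting point.

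I would then combine these four raw identities. For each single bundle, a suitable linear combination of its two raw formulas (using the additivity of the Euler characteristic along $\Sigma$, resp. $\Sigma_\star$) yields an identity of the shape $4\pi\chi(M^-)=(\text{an expression in the data of }\mathcal{E})$, resp. $4\pi\chi(M^-_\star)=(\text{an expression in the data of }\mathcal{E}_\star)$, which accounts for the factor $4\pi$ in $(1)$ and $(2)$. The bridge between the two bundles is the compatibility condition of the frontal bundle: it forces the area density of $\nu$ to be $\lambda_\star=K_{\mathrm{ext}}\lambda$ with $K_{\mathrm{ext}}$ the extrinsic curvature, so that $K\,d\widehat A$ and the starred absolute area element $d\widehat A_\star$ agree up to the sign of $K_{\mathrm{ext}}$ on the relevant regions, and it matches, through the local normal forms, the singular-point types, null directions and angle weights $\alpha^\pm(p)$ of $f$ along $\Sigma\cap\partial M$ with the corresponding starred quantities of $\nu$. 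Using these two facts to substitute the curvature and singular integrals of one bundle for those of the other, the $4\pi\chi(M^-_\star)$-identity turns into $(1)$ and the $4\pi\chi(M^-)$-identity into $(2)$; subtracting the two, or equivalently eliminating between the two signed-area formulas the common extrinsic contribution $\int K\,d\widehat A$ — which the compatibility condition ties to $\int d\widehat A_\star$ via $\lambda_\star=K_{\mathrm{ext}}\lambda$ — gives the ``intrinsic $=$ extrinsic'' balances $(4)$ and hence $(3)$.

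The main obstacle is the boundary/corner bookkeeping together with the matching of singular-point data across the two bundles. In the interior, the required combinations reduce to the closed-surface computation of Saji--Umehara--Yamada, but with boundary one must carry along precisely the split geodesic-curvature integrals over $\partial M\cap M^\pm$ and $\partial M\cap M^\pm_\star$, the corner contributions at the transversal intersections $\Sigma\cap\partial M$ and $\Sigma_\star\cap\partial M$ (encoded by the weights $2\alpha^+(p)-\pi$ and the counts $\#(\Sigma\cap\partial M)^{\mathrm{null}}$, $\#(\Sigma\cap\partial M)^\pm$ and their starred analogues), and the sign conventions for $M^\pm$, $M^\pm_\star$, $S^\pm$, $S^\pm_\star$ under the $\star$-operation. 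In particular, one must verify — using the compatibility condition together with the normal forms for singularities of the first kind and admissible second kind — that such a singular point of $f$ is a singular point of $\nu$ of the same respective type, so that the two bundles' Gauss--Bonnet formulas are simultaneously applicable and their corner terms correspond as written, and that transversality of $\Sigma$ and $\Sigma_\star$ to $\partial M$ keeps all these corner terms finite and well-defined. Once this correspondence and the area-density relation $\lambda_\star=K_{\mathrm{ext}}\lambda$ are in hand, $(1)$--$(4)$ follow by linear algebra from the four raw identities.
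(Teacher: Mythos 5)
Your overall framework --- apply the two boundary Gauss--Bonnet formulas for coherent tangent bundles to each of the two structures determined by $\varphi_f=df$ and $\psi_f=D\nu$, and then combine the four resulting ``raw'' identities linearly together with the decomposition $\chi(M)=\chi(M^{+}_\star)+\chi(M^{-}_\star)+\tfrac12\#(\Sigma_\star\cap\partial M)$ --- is exactly the paper's route (Theorem \ref{thm3.5} combined as in the proof of Theorem \ref{thm4.1}). However, the bridge you propose between the two bundles is the wrong identity, and that bridge is the only step with real content. You invoke $\lambda_\star=K^{\mathrm{ext}}\lambda$ (equivalently $d\widehat{A}_\star=K^{\mathrm{ext}}\,d\widehat{A}$, which is $(2)$ of Lemma \ref{lem3.4}) and claim that $K\,d\widehat{A}$ and $d\widehat{A}_\star$ ``agree up to the sign of $K^{\mathrm{ext}}$''. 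In a general ambient $(N,g)$ this is false: by the Gauss equation $K$ differs from $K^{\mathrm{ext}}$ by the ambient sectional curvature, and even in the flat case the exact relation is $K^{\mathrm{ext}}\,d\widehat{A}=d\widehat{A}_\star$, not a sign statement. What the derivations of $(1)$, $(3)$, $(4)$ actually require is $K\,d\widehat{A}=K_\star\,d\widehat{A}_\star$ ($(1)$ of Lemma \ref{lem3.4}), which holds because the two coherent structures share the same metric connection $D$, so both $2$-forms equal the curvature form $d\mu$; it is this identity that lets one replace $\int_M K_\star\,d\widehat{A}_\star$ by $\int_M K\,d\widehat{A}=\int_M K\,dA+2\int_{M^-}K\,d\widehat{A}$ and thereby mix the starred and unstarred raw identities. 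From your relation alone the substitution cannot be made: you would need $K=K_\star K^{\mathrm{ext}}$, which (given $(2)$) is equivalent to the identity you are missing, so the argument as sketched is either circular or confined to space forms.

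Second, the ``main obstacle'' you set yourself --- verifying via normal forms that a singular point of $f$ is a singular point of $\nu$ of the same type, so that the corner data of the two bundles ``correspond'' --- is neither needed nor true under the hypotheses of Theorem \ref{thm1.1}. In general $\Sigma$ and $\Sigma_\star$ are unrelated sets (an immersion $f$ has $\Sigma=\emptyset$ while its Gauss map $\nu$ is typically singular along the parabolic locus); they coincide only under the extra assumption that $\log|K^{\mathrm{ext}}|$ is bounded, which enters Lemma \ref{lem5.1} and Theorem \ref{thm5.2}, not Theorem \ref{thm1.1}. In formulas $(1)$--$(4)$ the singular-curvature integrals, corner sums and counts of the two bundles appear as separate summands and are never converted into one another; the only cross-substitution is the interior identity $K\,d\widehat{A}=K_\star\,d\widehat{A}_\star$. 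Drop the matching step (it would fail) and replace your bridge by that identity; with this correction the linear combination you outline does reproduce the paper's proof.
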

The notations in $(1)$-$(4)$ are explained in \S$\ref{sec2}$, $\ref{sec3}$. Theorem $\ref{thm1.1}$ is proved for general frontal bundles in Theorem $\ref{thm4.1}$. Theorem $\ref{thm1.1}$ allows us to generalize the formulas derived in \cite{4} to the case of surfaces with boundary (see Corollary $\ref{cor4.2}$). Furthermore, Theorem $\ref{thm1.1}$ can be applied to surfaces with non-zero bounded extrinsic curvature on the set of regular points of $f$ (see Theorem $\ref{thm5.2}$)．

The paper is organized as follows: In \S $\ref{sec2}$, we briefly review the notion of a coherent tangent bundle over a surface possibly with boundary, and two Gauss-Bonnet type formulas for it. In \S $\ref{sec3}$, we give the definition of a frontal bundle over a surface possibly with boundary, as well as definitions of some notions related to the frontal bundle, such as extrinsic curvature. We also derive Gauss-Bonnet type formulas for frontal bundles based on \S $\ref{sec2}$. In \S $\ref{sec4}$, we derive new formulas by appropriately combining the Gauss-Bonnet type formulas obtained in \S $\ref{sec3}$ (see Theorem $\ref{thm4.1}$). In \S $\ref{sec5}$, we consider the case where the extrinsic curvature of the set of regular points of a frontal is non-zero bounded. Then, we show that the formula for the number of singular points and the formula for the Euler characteristic are derived by Theorem $\ref{thm4.1}$ (see Theorem $\ref{thm5.2}$).

\section{Coherent tangent bundles and Gauss-Bonnet type formulas}\label{sec2}

In this section, we define the structure of a coherent tangent bundle over a surface (possibly with boundary) and let us describe Gauss-Bonnet type formulas for it, based on \cite{1,3,4,6}.

Let $M$ be an oriented surface (possibly with boundary).

\begin{definition}
\label{def2.1}

{\rm A \textit{coherent tangent bundle} over $M$ is a $5$-tuple $(M,\mathcal{E},\langle\cdot,\cdot\rangle,D,\varphi)$ satisfying the following properties:
\begin{itemize}
\item[$(1)$]$\mathcal{E}$ is an orientable vector bundle of rank $2$ over $M$,
\item[$(2)$]$\langle\cdot,\cdot\rangle$ is a metric on $\mathcal{E}$ and $D$ is a metric connection on $(\mathcal{E},\langle\cdot,\cdot\rangle)$,
\item[$(3)$]$\varphi:TM\to\mathcal{E}$ is a bundle homomorphism, such that for any smooth vector fields $X,Y$ on $M$,
\begin{equation*}
D_X\varphi(Y)-D_Y\varphi(X)=\varphi([X,Y])\label{2.1}
\end{equation*}
holds.
\end{itemize}}
\end{definition}

We give a typical example which has the structure of a coherent tangent bundle.

\begin{example}[\mbox{\cite[Example $2.4$]{4}}]
\label{ex2.2}

{\rm Let $(N,g)$ be a $3$-dimensional Riemannian manifold. A $C^\infty$-map $f:M\to N$ is a \textit{frontal} if for each $p\in M$, there exist a neighborhood $U$ of $p$ and a unit vector field $\nu$ along $f$ defined on $U$ such that for any smooth vector field $X$ on $U$, $g\left(df(X),\nu\right)=0$ holds. $\nu$ is called the \textit{unit normal vector field} of $f$ on $U$. Furthermore, if $(f,\nu):U\to T_1N$ is an immersion on a neighborhood $U$ of each $p\in M$, then $f$ is called a \textit{front}, or \textit{wave front}, where $T_1N$ is the unit tangent bundle of $N$. 

Let $f^*TN$ be the pull-back of the tangent bundle $TN$ by a frontal $f:M\to N$. Then, the structure of a coherent tangent bundle can be put into a subbundle $\mathcal{E}_f$ of $f^*TN$ consisting of vectors orthogonal to $\nu$ as follows: A map $\varphi_f:TM\to\mathcal{E}_f,\ \varphi_f(X):=df(X)$ gives a bundle homomorphism, where $df$ is a differential map of $f$. We define a metric on $\mathcal{E}_f$ as the metric induced from the Riemannian metric $g$ of $N$. Then, the tangent component $D$ of the Levi-Civita connection on $N$ gives the metric connection on $\mathcal{E}_f$, and for any vector fields $X,Y$ on $M$,
\begin{equation*}
D_X\varphi_f(Y)-D_Y\varphi_f(X)-\varphi_f([X,Y])=0\label{2.2}
\end{equation*}
holds. Thus, we obtain a coherent tangent bundle $(M,\mathcal{E}_f,\langle\cdot,\cdot\rangle,D,\varphi_f)$ over $M$.

When $N$ is orientable, the assumption that $\mathcal{E}_f$ is orientable leads to $f$ being co-orientable, where $f$ is \textit{co-orientable} if there exists a unit normal vector field $\nu$ globally defined on $M$.}
\end{example}

We fix a coherent tangent bundle $(M,\mathcal{E},\langle\cdot,\cdot\rangle,D,\varphi)$ over $M$.

\begin{definition}
\label{def2.3}

{\rm We define the \textit{first fundamental form} $ds^2_\varphi$ as the pull-back by $\varphi$ of the metric $\langle\cdot,\cdot\rangle$:
\begin{equation*}
ds^2_\varphi:=\varphi^*\langle\cdot,\cdot\rangle.\label{2.3}
\end{equation*}

A point $p\in M$ is a \textit{singular point} of $\varphi$ if $\varphi_p:=\varphi|_{T_pM}:T_pM\to\mathcal{E}_p$ is not a bijection, where $\mathcal{E}_p$ is the fiber of $\mathcal{E}$ at $p$. The set of singular points of $\varphi$ is denoted by $\Sigma_\varphi$. On the other hand, $p$ is called a \textit{regular point} of $\varphi$ if $p$ is not a singular point of $\varphi$.}
\end{definition}

We take a local coordinate system $(U;u,v)$ compatible with the orientation of $M$. Also, we take a positive orthonormal frame $\left\{\bm{e}_1,\bm{e}_2\right\}$ on $\mathcal{E}|_U$ and let $\left\{\omega_1,\omega_2\right\}$ its dual basis.

\begin{definition}
\label{def2.4}

{\rm We define the \textit{area form} $dA_{\mathcal{E}}$ of $\mathcal{E}|_U$ by
\begin{equation}
dA_{\mathcal{E}}:=\omega_1\wedge\omega_2.\label{2.4}
\end{equation}
Then, $dA_{\mathcal{E}}$ is globally defined on $\mathcal{E}$, since $dA_{\mathcal{E}}$ is independent of the choice of positive orthonormal frames of $\mathcal{E}|_U$.

We define the \textit{signed area form} $d\widehat{A}_\varphi$ of $M$ as a pull-back of $dA_{\mathcal{E}}$ by $\varphi$:
\begin{equation}
d\widehat{A}_\varphi:=\varphi^*(dA_{\mathcal{E}}).\label{2.5}
\end{equation}
We define the \textit{signed area density function} $\lambda_\varphi$ on $U$ by
\begin{eqnarray}
\lambda_\varphi&:=&d\widehat{A}_\varphi\left(\frac{\partial}{\partial u},\frac{\partial}{\partial v}\right)=dA_{\mathcal{E}}\left(\varphi_u,\varphi_v\right)\nonumber\\
&&\ \ \ \left(\varphi_u:=\varphi\left(\frac{\partial}{\partial u}\right),\ \varphi_v:=\varphi\left(\frac{\partial}{\partial v}\right)\right).\label{2.6}
\end{eqnarray}
By $(\ref{2.6})$, the set of singular points $\Sigma_\varphi\cap U$ can be expressed as
\begin{equation*}
\Sigma_\varphi\cap U=\left\{p\in U\mid\lambda_\varphi(p)=0\right\}.\label{2.7}
\end{equation*}
Also, by $(\ref{2.6})$, we see that $d\widehat{A}_\varphi$ defines a smooth $2$-form on $M$, since
\begin{equation*}
d\widehat{A}_\varphi=\lambda_\varphi du\wedge dv.\label{2.8}
\end{equation*}

We define the (\textit{unsigned}) \textit{area form} $dA_\varphi$ on $U$ by
\begin{equation}
dA_\varphi:=|\lambda_\varphi|du\wedge dv.\label{2.9}
\end{equation}
Then, $dA_\varphi$ defines a continuous $2$-form on $M$, since $dA_\varphi$ is independent of the choice of positive local coordinates of $U$.

Using these $2$-forms $d\widehat{A}_\varphi$ and $dA_\varphi$, we define two subsets $M^+_\varphi$ and $M^-_\varphi$ of $M$ as follows, respectively:
\begin{eqnarray}
M^+_\varphi&:=&\left\{p\in M\backslash\Sigma_\varphi\mid (dA_\varphi)_p=(d\widehat{A}_\varphi)_p\right\},\nonumber\\
M^-_\varphi&:=&\left\{p\in M\backslash\Sigma_\varphi\mid (dA_\varphi)_p=-(d\widehat{A}_\varphi)_p\right\}.\label{2.10}
\end{eqnarray}
Then, we obtain $\Sigma_\varphi=\partial M^+_\varphi=\partial M^-_\varphi$. Using the signed area density function $\lambda_\varphi$ on $U$, we can express $M^+_\varphi$ and $M^-_\varphi$ on $U$ as follows, respectively:
\begin{eqnarray*}
M^+_\varphi\cap U=\left\{p\in U\mid \lambda_\varphi(p)>0\right\},\ M^-_\varphi\cap U=\left\{p\in U\mid\lambda_\varphi(p)<0\right\}.\label{2.11}
\end{eqnarray*}}
\end{definition}

\begin{definition}
\label{def2.5}
{\rm By the definition of the metric connection $D$ of the coherent tangent bundle $(\mathcal{E},\langle\cdot,\cdot\rangle)$, for any smooth vector field $X$ on $U$,
\begin{equation*}
\langle D_X\bm{e}_i,\bm{e}_i\rangle=0\ \ \ (i=1,2)\label{2.12}
\end{equation*}
holds. So, we define the \textit{connection form} $\mu$ on $U$ as the $1$-form on $U$ given by
\begin{equation*}
D_X\bm{e}_1=-\mu(X)\bm{e}_2,\ D_X\bm{e}_2=\mu(X)\bm{e}_1.\label{2.13}
\end{equation*}}
\end{definition}

The exterior derivative $d\mu$ of the connection form $\mu$ defines a $2$-form on $M$, since $d\mu$ is independent of the choice of the positive orthonormal frames of $\mathcal{E}|_U$. If we denote by $K_\varphi$ the Gaussian curvature with respect to $ds^2_\varphi$ defined on the set of regular points $M\backslash\Sigma_\varphi$, then we have
\begin{equation*}
d\mu=K_\varphi d\widehat{A}_\varphi.\label{2.14}
\end{equation*}
Thus, the $2$-form $K_\varphi d\widehat{A}_\varphi$ on $M\backslash\Sigma_\varphi$ can be smoothly extended to the $2$-form $d\mu$ on $M$. Also, by $(\ref{2.10})$, we have
\begin{equation*}
K_\varphi d\widehat{A}_\varphi=\left\{\begin{array}{cl}K_\varphi dA_\varphi&(\mbox{on $M^+_\varphi$}),\\- K_\varphi dA_\varphi&(\mbox{on $M ^-_\varphi$}).\end{array}\right.\label{2.15}
\end{equation*}

\begin{definition}
\label{def2.6}

{\rm A singular point $p\in U$ is \textit{non-degenerate} if the derivative $d\lambda_\varphi$ of the signed area density function $\lambda_\varphi$ does not vanish at $p$. This definition is independent of the choice of the coordinates of $U$.}
\end{definition}

\begin{definition}
\label{def2.7}

{\rm If $p\in U$ is a non-degenerate singular point, taking a neighborhood $U$ of $p$ sufficiently small, we can parameterize $\Sigma_\varphi\cap U$ by a regular curve $\gamma(t)\ (\gamma(0)=p)$. Such a curve is called a \textit{singular curve}, and the direction of the tangent vector $\gamma^\prime(t):=\frac{d\gamma}{dt}(t)$ of $\gamma(t)$ is called the \textit{singular direction}.

On the other hand, if $p$ is a non-degenerate singular point, the kernel $\ker\varphi_p$ of $\varphi_p$ is a $1$-dimensional linear space, which is called the \textit{null direction}. Also, the null direction at each point $\gamma(t)$ is $1$-dimensional because $\gamma$ consists of non-degenerate singular points. Therefore, we can take a smooth vector field $\eta$ along $\gamma$ such that $\eta(t)$ points in the null direction at $\gamma(t)$, which is called a \textit{null vector field}.}
\end{definition}

\begin{definition}
\label{def2.8}

{\rm Let $p\in M$ be a non-degenerate singular point, $\gamma(t)$ be a singular curve passing through $p=\gamma(0)$, and $\eta(t)$ be a null vector field along $\gamma(t)$. $p$ is of the \textit{first} (resp. \textit{second}) \textit{kind} if the singular direction and the null direction at $p$ are different (resp. the same), i.e.,
\begin{equation*}
\det\left(\gamma^\prime(0),\eta(0)\right)\neq0\ \ \ \left(\mbox{resp}.\ \det\left(\gamma^\prime(0),\eta(0)\right)=0\right),\label{2.16}
\end{equation*}
where ``$\det$'' denotes the determinant function of the $2\times2$ matrix obtained by identifying $T_pM$ with $\mathbb{R}^2$. Furthermore, a singular point $p$ of the second kind is \textit{admissible} if there exists a neighborhood $V$ of $p$ such that the set of singular points on $V\backslash\{p\}$ consists only of singular points of the first kind, i.e., there exists a non-negative integer $k$ such that
\begin{equation*}
\delta(0)=0,\ \delta^\prime(0)=0,\ \cdots,\ \delta^{(k)}(0)=0,\ \delta^{(k+1)}(0)\neq0,\label{2.17}
\end{equation*}
where $\delta(t):=\det\left(\gamma^\prime(t),\eta(t)\right)$. We note that an admissible singular point of the second kind is a non-degenerate peak (see the definition of ``peak'' in \cite{1,3,5}).}
\end{definition}

We fix a singular point $p\in M$ of the first kind. Then, there exists a neighborhood $V$ of $p$ and a singular curve $\gamma(t)$ passing through $p=\gamma(0)$ on $V$. If necessary, the neighborhood $V$ of $p$ can be taken small enough so that $\gamma(t)$ consists only of singular points of the first kind. Therefore, we can assume that $\gamma(t)$ consists only of singular points of the first kind. If $\eta(t)$ is a null vector field along $\gamma(t)$, then $\gamma^\prime(t)$ and $\eta(t)$ are linearly independent at $\gamma(t)$. Therefore, $\varphi(\gamma^\prime(t))$ is non-zero. Furthermore, $d\lambda_\varphi(\gamma^\prime(t))$ is zero, since $\gamma(t)$ is consisted of singular points of $\varphi$. Therefore, $d\lambda_\varphi(\eta(t))$ is non-zero.

\begin{definition}
\label{def2.9}

{\rm We define the \textit{singular curvature} $\kappa_s^\varphi(t)$ of the singular curve $\gamma(t)$ as
\begin{eqnarray*}
\kappa_s^\varphi(t):=\mathrm{sgn}\left(d\lambda_\varphi(\eta(t))\right)\frac{\left\langle D_t\varphi(\gamma^\prime(t)),\bm{n}(t)\right\rangle}{|\varphi(\gamma^\prime(t))|^2},\label{2.18}
\end{eqnarray*}
where the null vector field $\eta(t)$ is taken such that $\left\{\gamma^\prime(t),\eta(t)\right\}$ is compatible with the orientation of $M$, $\bm{n}(t)$ is a section of $\mathcal{E}$ along $\gamma(t)$ such that $\left\{\frac{\varphi(\gamma^\prime(t))}{|\varphi(\gamma^\prime(t))|},\bm{n}(t)\right\}$ is a positive orthonormal frame on $\mathcal{E}_{\gamma(t)}$, $D_t\gamma^\prime(t)$ is a covariant derivative of $\gamma^\prime(t)$ with respect to $\gamma^\prime(t)$, and $|\cdot|:=\sqrt{\langle\cdot,\cdot\rangle}$.}
\end{definition}

We give two properties of the singular curvature necessary to show Gauss-Bonnet type formulas.

\begin{proposition}[\mbox{\cite[Proposition $1.7$]{3}}]
\label{prop2.10}

{\it The value of the singular curvature $\kappa_s^\varphi$ is independent of the parameters of the singular curve $\gamma$, the orientation of $\gamma$, the orientation of $M$ and the orientation of $\mathcal{E}$.}
\end{proposition}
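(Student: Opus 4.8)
The plan is to observe that $\kappa_s^\varphi(t)$ is the product of two pieces, the sign $\varepsilon(t):=\mathrm{sgn}\left(d\lambda_\varphi(\eta(t))\right)$ and the ratio $R(t):=\langle D_t\varphi(\gamma'(t)),\bm{n}(t)\rangle/|\varphi(\gamma'(t))|^2$, and to check the four claimed invariances one at a time. In each case I would write down exactly how the data entering Definition $\ref{def2.9}$ are forced to transform — a parametrization of $\gamma$; the choice of $\eta$ pinned down (up to positive scale) by "$\{\gamma',\eta\}$ compatible with the orientation of $M$''; the choice of $\bm{n}$ pinned down by "$\{\varphi(\gamma')/|\varphi(\gamma')|,\bm{n}\}$ a positive orthonormal frame of $\mathcal{E}$''; and the local function $\lambda_\varphi$ attached to a positively oriented chart — and then verify that all the sign changes cancel in $\varepsilon R$.

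First I would treat reparametrization. For an orientation-preserving change $\tilde\gamma(s)=\gamma(t(s))$ with $t'>0$ one has $\varphi(\tilde\gamma')=t'\varphi(\gamma')$, so the unit vector $\varphi(\tilde\gamma')/|\varphi(\tilde\gamma')|$, hence also $\bm{n}$ and $\eta$, may be kept, and $\varepsilon$ is literally unchanged; for $R$ one expands $D_s[\varphi(\tilde\gamma')]=t''\varphi(\gamma')+(t')^2D_t\varphi(\gamma')$, and the key point is that pairing with $\bm{n}$ annihilates the $t''$-term because $\langle\varphi(\gamma'),\bm{n}\rangle=0$ by construction, after which the factors $(t')^2$ in numerator and in $|\varphi(\tilde\gamma')|^2$ cancel. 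For the orientation reversal $\tilde\gamma(s)=\gamma(-s)$ one has $\varphi(\tilde\gamma')=-\varphi(\gamma')$, which forces $\bm{n}\mapsto-\bm{n}$ and $\eta\mapsto-\eta$; then $\varepsilon$ changes sign through $d\lambda_\varphi(-\eta)$, while $R$ also changes sign — the reversal in the parameter and the reversal in $\tilde\gamma'$ cancel in $D_s[\varphi(\tilde\gamma')]$, leaving only the sign coming from $\bm{n}\mapsto-\bm{n}$ — so the product is preserved.

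Next I would reverse the orientation of $M$: this leaves $\gamma$, $\varphi(\gamma')$ and $\bm{n}$, hence $R$, untouched, and forces $\eta\mapsto-\eta$, but it simultaneously forces $\lambda_\varphi\mapsto-\lambda_\varphi$ since $\lambda_\varphi$ is read off from a positively oriented coordinate system, so $d\lambda_\varphi(\eta)=(-d\lambda_\varphi)(-\eta)$ keeps its sign and $\varepsilon$ is unchanged. Finally I would reverse the orientation of $\mathcal{E}$: this forces $\bm{n}\mapsto-\bm{n}$, so $R\mapsto-R$, but it also forces the area form $\omega_1\wedge\omega_2$ of $\mathcal{E}|_U$, hence $\lambda_\varphi$, to change sign, so $\varepsilon\mapsto-\varepsilon$, and again $\varepsilon R$ is unchanged. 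Assembling the four cases gives the statement.

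I expect the only genuine work here to be bookkeeping discipline: one must fix the two positivity conventions and the convention for $\lambda_\varphi$ at the outset, and never conflate the orientation of $M$ with the orientation of $\mathcal{E}$. The single non-formal point is the vanishing of the $t''$-term (equivalently the acceleration term $\gamma''$) in the reparametrization step, which is precisely what makes $\kappa_s^\varphi$ well defined at all; everything else is a careful tracking of minus signs.
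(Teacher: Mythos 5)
Your argument is correct: the only non-formal point is indeed that $\langle\varphi(\gamma^\prime),\bm{n}\rangle=0$ annihilates the acceleration term under reparametrization, and your sign bookkeeping for the other three changes (flipping $\eta$, $\bm{n}$, and $\lambda_\varphi$ according to which orientation is reversed) is accurate. Note that the paper itself gives no proof of Proposition~\ref{prop2.10}, quoting it from the cited work of Saji--Umehara--Yamada, and your proof is essentially the standard argument given there, so there is nothing to add.
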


\begin{proposition}[\mbox{\cite[Proposition $2.11$]{3}}]
\label{prop2.11}

{\it If the singular point $p$ of the second kind is admissible and $\gamma(t)$ is a singular curve passing through $p$, then the singular curvature measure $\kappa_s^\varphi ds_\varphi$ defines a bounded $1$-form along $\gamma$, where $ds_\varphi:=\left|\varphi(\gamma^\prime(t))\right|dt$.}
\end{proposition}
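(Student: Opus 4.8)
The plan is to localize the question near the admissible peak $p=\gamma(0)$ and to exhibit the blow-up of $\kappa_s^\varphi$ there as being exactly cancelled, in the product $\kappa_s^\varphi\,ds_\varphi$, by the simultaneous degeneration of the length element $ds_\varphi=|\varphi(\gamma'(t))|\,dt$. First I would pass to adapted coordinates: since $p$ is non-degenerate, $\Sigma_\varphi$ is locally a regular curve, so there is a positive coordinate system $(U;u,v)$ centred at $p$ with $\Sigma_\varphi\cap U=\{v=0\}$ and $\gamma(t)=(t,0)$; then $\lambda_\varphi\equiv0$ and $\partial_v\lambda_\varphi\neq0$ along the $u$-axis. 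A non-degenerate singular point has corank one, so $\ker\varphi_{\gamma(t)}$ is spanned by a smooth, nowhere-zero null vector field $\eta(t)=a(t)\,\partial_u+b(t)\,\partial_v$ along $\gamma$; since the null direction at $p$ coincides with the singular direction $\partial_u$ we have $a(0)\neq0$, while $\delta(t)=\det\bigl(\gamma'(t),\eta(t)\bigr)=b(t)$, so admissibility says $b$ vanishes at $0$ to some finite order $k+1$. In particular $b(t)\neq0$ for $0<|t|<\varepsilon$, i.e. the nearby singular points are all of the first kind.

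The structural heart of the argument is the next step. Applying $\varphi$ to $\eta(t)\in\ker\varphi_{\gamma(t)}$ gives $a(t)\,\varphi(\partial_u)+b(t)\,\varphi(\partial_v)=0$ along $\gamma$, hence
\[
\varphi(\gamma'(t))=\varphi(\partial_u)\big|_{\gamma(t)}=c(t)\,W(t),\qquad c(t):=-\frac{b(t)}{a(t)},\quad W(t):=\varphi(\partial_v)\big|_{\gamma(t)}.
\]
Here $c$ is smooth and (as $a(0)\neq0$) vanishes at $0$ to order exactly $k+1$, so $c(t)\neq0$ for $0<|t|<\varepsilon$; and $W$ is a smooth section of $\mathcal{E}$ along $\gamma$ with $W(0)\neq0$, because $\varphi(\partial_u)\big|_p=c(0)W(0)=0$ forces $W(0)$ to span the one-dimensional image of $\varphi_p$. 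Thus $|W(t)|$ is bounded above and away from $0$ near $t=0$, and $ds_\varphi=|c(t)|\,|W(t)|\,dt$. Since $\bm{n}(t)$ is a unit section orthogonal to $\varphi(\gamma'(t))/|\varphi(\gamma'(t))|=\mathrm{sgn}(c(t))\,W(t)/|W(t)|$, we get $\langle W(t),\bm{n}(t)\rangle=0$, and if $\bm{n}_0(t)$ denotes the (smooth, nowhere-zero) section with $\{W(t)/|W(t)|,\bm{n}_0(t)\}$ positively oriented, then $\bm{n}(t)=\mathrm{sgn}(c(t))\,\bm{n}_0(t)$.

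Finally I would differentiate: $D_t\varphi(\gamma'(t))=c'(t)\,W(t)+c(t)\,D_tW(t)$, and since $\langle W(t),\bm{n}(t)\rangle=0$ this gives $\langle D_t\varphi(\gamma'(t)),\bm{n}(t)\rangle=c(t)\,\langle D_tW(t),\bm{n}(t)\rangle$. Substituting into $\kappa_s^\varphi\,ds_\varphi=\mathrm{sgn}\bigl(d\lambda_\varphi(\eta(t))\bigr)\,\langle D_t\varphi(\gamma'(t)),\bm{n}(t)\rangle\,|\varphi(\gamma'(t))|^{-1}\,dt$, the factor $c(t)$ from the numerator cancels $|c(t)|$ from $|\varphi(\gamma'(t))|$, and the residual $\mathrm{sgn}(c(t))$ merges with $\bm{n}(t)$ to produce $\bm{n}_0(t)$:
\[
\kappa_s^\varphi(t)\,ds_\varphi=\mathrm{sgn}\bigl(d\lambda_\varphi(\eta(t))\bigr)\,\frac{\langle D_tW(t),\bm{n}_0(t)\rangle}{|W(t)|}\,dt.
\]
The coefficient on the right is smooth in $t$ near $0$ (each of $D_tW$, $\bm{n}_0$ and $|W|^{-1}$ is smooth there), and $\mathrm{sgn}\bigl(d\lambda_\varphi(\eta(t))\bigr)\in\{-1,0,1\}$, so the right-hand side is bounded near $p$; elsewhere $\gamma$ consists of first-kind points, along which $\kappa_s^\varphi\,ds_\varphi$ is continuous. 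Hence $\kappa_s^\varphi\,ds_\varphi$ is a bounded $1$-form along $\gamma$.

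I expect the main obstacle to be making the cancellation rigorous rather than merely formal: one must invoke that non-degenerate singular points have corank one in order to extract the nonvanishing transverse section $W$, and — since $\bm{n}(t)$ itself is generally discontinuous at $p$ (when $k+1$ is odd) — one must track $\mathrm{sgn}(c(t))$ carefully so as to recognise that $\mathrm{sgn}(c(t))\,\bm{n}(t)=\bm{n}_0(t)$ is smooth. Admissibility enters precisely to guarantee that $\delta=b$ vanishes to finite order, which both isolates $p$ among the second-kind points and makes $\mathrm{sgn}(c(t))$ a genuine $\pm1$-valued function on a punctured neighbourhood of $0$; note that $\kappa_s^\varphi$ itself is genuinely unbounded there, blowing up like $|t|^{-(k+1)}$, so it is essential that the argument be carried out for the measure $\kappa_s^\varphi\,ds_\varphi$ and not for $\kappa_s^\varphi$ alone.
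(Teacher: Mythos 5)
Your argument is correct, but note that the paper itself contains no proof of this statement: Proposition \ref{prop2.11} is imported verbatim from Saji--Umehara--Yamada \cite{3}, so the only meaningful comparison is with that source, whose proof rests on exactly the cancellation you exhibit. Your treatment of the two delicate points is sound: writing $\varphi(\gamma^\prime(t))=c(t)W(t)$ with $c=-b/a$ vanishing to the finite order $k+1$ guaranteed by admissibility and $W=\varphi(\partial_v)|_{\gamma}$ nonvanishing (which uses that non-degenerate singular points have one-dimensional kernel, as in Definition \ref{def2.7}), and observing that $\bm{n}(t)=\mathrm{sgn}(c(t))\,\bm{n}_0(t)$ so that the residual sign from $|c(t)|$ is absorbed into the smooth co-normal $\bm{n}_0$, yielding the bounded density $\mathrm{sgn}\bigl(d\lambda_\varphi(\eta(t))\bigr)\langle D_tW(t),\bm{n}_0(t)\rangle/|W(t)|$. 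Two small remarks: the factor $\mathrm{sgn}\bigl(d\lambda_\varphi(\eta(t))\bigr)$ vanishes only at $t=0$, where $\kappa_s^\varphi$ is not defined anyway, so your bound $|\mathrm{sgn}|\le 1$ is all that is needed; and your closing aside that $\kappa_s^\varphi$ blows up \emph{exactly} like $|t|^{-(k+1)}$ is only generically true, since the density $\langle D_tW,\bm{n}_0\rangle/|W|$ may itself vanish at $t=0$ --- this does not affect the proof, which concerns only the measure $\kappa_s^\varphi\,ds_\varphi$.
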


Let $M$ be a compact oriented surface with boundary and $(M,\mathcal{E},\langle\cdot,\cdot\rangle,D,\varphi)$ a coherent tangent bundle over $M$. We suppose that $\varphi$ admits only singular points of the first kind and admissible singular points of the second kind, and that the set of singular points $\Sigma_\varphi$ is transversal to the boundary $\partial M$. We triangulate $M$ so that the singular points of the second kind in the interior $M\backslash\partial M$ and the singular points on $\partial M$ are vertices.

\begin{proposition}[\mbox{\cite[Theorem $\mathrm{A}$]{3}}]
\label{prop2.12}

{\it Let $p\in M\backslash\partial M$ be a singular point of the second kind. Then, the sum $\alpha^+_\varphi(p)$ (resp. $\alpha^-_\varphi(p)$) of interior angles on the $M^+_\varphi$ (resp. $M^-_\varphi$) side at $p$ satisfies
\begin{equation*}
\alpha^+_\varphi(p)+\alpha^-_\varphi(p)=2\pi,\ \alpha^+_\varphi(p)-\alpha^-_\varphi(p)\in\left\{-2\pi,0,2\pi\right\}.\label{2.19}
\end{equation*}}
\end{proposition}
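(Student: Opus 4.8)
The plan is to reduce the statement to a purely local computation near $p$ in coordinates adapted to the singular curve, and then to read off the two interior angles from the way the image of the singular curve behaves at $p$, the whole dichotomy being governed by the admissibility integer $k$ of Definition $\ref{def2.8}$.

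First I would use the non-degeneracy of a singular point of the second kind (Definition $\ref{def2.8}$) to pick a positively oriented coordinate system $(U;u,v)$ centred at $p$ with $\Sigma_\varphi\cap U=\{u=0\}$; since $d\lambda_\varphi\neq0$ along $\Sigma_\varphi$, the function $\lambda_\varphi$ changes sign across $\{u=0\}$, so after possibly applying the orientation-preserving change $(u,v)\mapsto(-u,-v)$ we may assume $M^+_\varphi\cap U=\{u>0\}$, $M^-_\varphi\cap U=\{u<0\}$, and the singular curve is $\gamma(v)=(0,v)$. Because $p$ is of the second kind, $\gamma'(0)=\partial/\partial v$ lies in the null direction, so $\varphi_v(0,0)=0$, while $\varphi_u(0,0)\neq0$ (a non-degenerate singular point has $\mathrm{rank}\,\varphi_p=1$); set $\bm w:=\varphi_u(0,0)/|\varphi_u(0,0)|$. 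Along $\Sigma_\varphi$ one has $\lambda_\varphi(0,v)=0$, so $\varphi_v(0,v)$ and $\varphi_u(0,v)$ are linearly dependent; as $\varphi_u(0,v)$ stays nonzero near $v=0$, we may write $\varphi_v(0,v)=e(v)\,\varphi_u(0,v)$ with $e$ smooth and $e(0)=0$. A direct computation identifies $e$, up to a nowhere-vanishing smooth factor, with the function $\delta(v)=\det\bigl(\gamma'(v),\eta(v)\bigr)$ of Definition $\ref{def2.8}$; hence $e$ has a zero of order exactly $k+1$ at $v=0$.

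The geometric content is then the behaviour of the unit vector $\varphi(\gamma'(v))/|\varphi(\gamma'(v))|$ at $p$. From $\varphi(\gamma'(v))=\varphi_v(0,v)=e(v)\varphi_u(0,v)$ we get
\[
\frac{\varphi(\gamma'(v))}{|\varphi(\gamma'(v))|}=\mathrm{sgn}\bigl(e(v)\bigr)\,\frac{\varphi_u(0,v)}{|\varphi_u(0,v)|}\ \longrightarrow\ \pm\,\bm w \qquad (v\to0^{\pm}),
\]
and, $e$ having a zero of order exactly $k+1$, the two one-sided limits coincide when $k$ is odd (the image of the singular curve is a regular arc through $\varphi(p)$) and are opposite when $k$ is even (the image of the singular curve has a cusp at $\varphi(p)$). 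Now $\alpha^{\pm}_\varphi(p)$ is the total angle of the sector $\overline{M^{\pm}_\varphi}\cap U$ at $p$ measured in $ds^2_\varphi$; analysing $ds^2_\varphi$ near $p$ through the leading Taylor coefficients of $\varphi$ (equivalently, via the normal form for admissible singular points of the second kind, i.e.\ non-degenerate peaks) shows that $\alpha^{\pm}_\varphi(p)$ equals $\pi$ minus the signed angle swept by $\varphi(\gamma'(v))/|\varphi(\gamma'(v))|$ as $v$ runs through $0$, the sign being read off on the $M^{\pm}_\varphi$ side. When $k$ is odd this swept angle is $0$, so $\alpha^+_\varphi(p)=\alpha^-_\varphi(p)=\pi$. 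When $k$ is even the one-sided limits are antipodal, the image of $\gamma$ has a cusp, and exactly one of the two sectors ``wraps around'' while the other is pinched, so $(\alpha^+_\varphi(p),\alpha^-_\varphi(p))\in\{(0,2\pi),(2\pi,0)\}$, the alternative depending on the sign of the leading coefficient of $e$ relative to which side is $M^+_\varphi$. In every case $\alpha^+_\varphi(p)+\alpha^-_\varphi(p)=2\pi$ and $\alpha^+_\varphi(p)-\alpha^-_\varphi(p)\in\{-2\pi,0,2\pi\}$, as claimed.

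The step I expect to be the main obstacle is turning ``the total angle of the sector $\overline{M^{\pm}_\varphi}\cap U$ at $p$'' into a quantity one can actually compute. Since $ds^2_\varphi$ degenerates precisely along $\Sigma_\varphi$, the angle cannot be extracted from $(\varphi_p, ds^2_\varphi|_p)$ alone (that data is identically of rank one at $p$): one genuinely needs the second-order behaviour of $\varphi$ at $p$, and, in the cusp case, a careful argument — using that $d\mu=K_\varphi\,d\widehat{A}_\varphi$ extends smoothly across $\Sigma_\varphi$ (so that its integral over a shrinking neighbourhood of $p$ tends to $0$) and that $\kappa_s^\varphi\,ds_\varphi$ is bounded near $p$ by Proposition $\ref{prop2.11}$ — to see that one sector angle is exactly $2\pi$ and the other exactly $0$, rather than some intermediate pair summing to $2\pi$. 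Verifying in addition that the resulting values of $\alpha^\pm_\varphi(p)$ are independent of the auxiliary choices (the coordinates, the triangulation, the null vector field) is the remaining bookkeeping; the sign analysis above then completes the proof.
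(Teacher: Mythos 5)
First, a point of comparison: the paper does not prove Proposition~\ref{prop2.12} at all; it is imported verbatim from Saji--Umehara--Yamada \cite[Theorem A]{3}, so there is no internal argument to measure your attempt against, and what you are really proposing is a reconstruction of that cited proof. Within your outline, the preparatory steps are sound: in adapted coordinates with $\Sigma_\varphi\cap U=\{u=0\}$ one indeed has $\varphi_v(0,v)=e(v)\varphi_u(0,v)$ with $\varphi_u\neq0$, the function $e$ agrees with $\delta(v)=\det(\gamma'(v),\eta(v))$ up to a nowhere-vanishing factor (take $\eta=\partial_v-e\,\partial_u$), admissibility in Definition~\ref{def2.8} gives vanishing order exactly $k+1$, and the parity of $k$ does decide whether the one-sided limits of $\varphi(\gamma'(v))/|\varphi(\gamma'(v))|$ agree or are antipodal. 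This correctly pins down the $\varphi$-directions at $p$ of the two half-branches of the singular curve, i.e.\ the boundary data of the two sectors.

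The genuine gap is the step you yourself flag and then do not carry out: the assertion that $\alpha^\pm_\varphi(p)$ equals $\pi$ minus the angle ``swept'' by this unit vector, ``shown by analysing the leading Taylor coefficients / the normal form,'' is exactly the content of the theorem and is nowhere derived. Two analytic facts carry all the weight and are missing. (i) Well-definedness: since $\varphi_p$ has rank one, every non-null direction at $p$ is collapsed onto a single line in $\mathcal{E}_p$, so the interior angle of a sector is not determined by first-order data and must be defined and shown to exist as a limit; your boundary computation only shows that, \emph{if} it exists as a limiting total turning, each angle is a non-negative multiple of $\pi$ of the parity dictated by whether the two boundary directions agree. (ii) The identity $\alpha^+_\varphi(p)+\alpha^-_\varphi(p)=2\pi$: for a metric degenerating along $\Sigma_\varphi$ this is far from automatic, and without it nothing excludes pairs such as $(2\pi,2\pi)$ in the $k$ odd case or $(\pi+c,\pi-c)$, $(2\pi,2\pi)$-type values in the cusp case; note also that your ``swept angle'' is a jump discontinuity of $\mathrm{sgn}(e(v))\varphi_u/|\varphi_u|$ at $v=0$, not a continuous sweep, so the formula is not even well posed as stated, and since the angle is a property of the whole sector rather than of its boundary tangents, no formula in the boundary sweep alone can distinguish $(2\pi,0)$ from $(0,2\pi)$ or rule out the extra $2\pi$ ambiguity. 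Your appeal to the smooth extension of $K_\varphi d\widehat{A}_\varphi$ and to Proposition~\ref{prop2.11} gestures in the right direction, but turning that into (i) and (ii) (via adapted coordinates/normal forms at admissible second-kind singular points and a careful limiting argument) is precisely what \cite{3} does; as written, your argument assumes the conclusion at its central step.
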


\begin{definition}
\label{def2.13}

{\rm A singular point $p\in M\backslash\partial M$ of the second kind is called \textit{positive} (resp. \textit{null}, \textit{negative}) if
\begin{eqnarray*}
&&\alpha^+_\varphi(p)-\alpha^-_\varphi(p)=2\pi\nonumber\\
&&\ \ \ \left(\mbox{resp}.\ \alpha^+_\varphi(p)-\alpha^-_\varphi(p)=0,\ \alpha^+_\varphi(p)-\alpha^-_\varphi(p)=-2\pi\right).\label{2.20}
\end{eqnarray*}}
\end{definition}

\begin{proposition}[\mbox{\cite[Theorem $2.13$]{1}}]
\label{prop2.14}

{\it Let $p\in\partial M$ be a singular point. If the null direction at $p$ is different from the direction of $\partial M$, then $\alpha^+_\varphi(p)$ and $\alpha^-_\varphi(p)$ satisfy
\begin{equation*}
\alpha^+_\varphi(p)+\alpha^-_\varphi(p)=\pi,\ \alpha^+_\varphi(p)-\alpha^-_\varphi(p)\in\left\{-\pi,\pi\right\}.\label{2.21}
\end{equation*}
On the other hand, if the null direction at $p$ is the same as the direction of $\partial M$, then $\alpha^+_\varphi(p)$ and $\alpha^-_\varphi(p)$ satisfy
\begin{equation*}
\alpha^+_\varphi(p)-\alpha^-_\varphi(p)=0.\label{2.22}
\end{equation*}}
\end{proposition}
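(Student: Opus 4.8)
The plan is to establish this as the boundary counterpart of Proposition~\ref{prop2.12}, by a local analysis near $p$ combined with the local normal forms for a coherent tangent bundle at a non-degenerate singular point. Since $\varphi$ admits only singular points of the first kind and admissible singular points of the second kind, $p$ is a non-degenerate singular point, so near $p$ the set $\Sigma_\varphi$ is a regular curve, which by hypothesis crosses $\partial M$ transversally at $p$. Fixing a sufficiently small disk $V=B_\epsilon(p)$, the set $D^+_\epsilon:=M\cap V$ is a half-disk bounded by an arc of $\partial M$ through $p$, and $\gamma^+:=\Sigma_\varphi\cap D^+_\epsilon$ is a single regular arc issuing from $p$ and meeting the rest of $\partial D^+_\epsilon$ transversally; it splits $D^+_\epsilon$ into exactly two regions, one contained in $M^+_\varphi$ and one in $M^-_\varphi$ (the sign of $\lambda_\varphi$ changes across $\gamma^+$, since $\gamma^+\setminus\{p\}$ consists of points of the first kind). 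By definition $\alpha^\pm_\varphi(p)$ is the total $ds^2_\varphi$-angle at $p$ of the region on the $M^\pm_\varphi$-side, so the problem reduces to computing these two angles; as an angle at $p$ is a limiting quantity, only the $1$-jet of $ds^2_\varphi$ at $p$, together with the local shapes of $\Sigma_\varphi$ and $\partial M$, enters.

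\emph{First kind.} Suppose $p$ is of the first kind. By the local structure of $ds^2_\varphi$ at such a point (essentially the normal forms of \cite{3,5}, or directly from the fact that the null direction spans $\ker\varphi_p$) one may choose coordinates $(u,v)$ centered at $p$ with $\Sigma_\varphi=\{v=0\}$, with $\partial_v$ spanning the null direction along $\Sigma_\varphi$, and with $ds^2_\varphi$ equal — to the order relevant for the angle at $p$ — to the pull-back of a flat metric $h$ under the fold $\pi(u,v)=(u,\mathrm{sgn}(v)v^2/2)$; passing to $h$-normal coordinates at $\pi(p)$ we may take $h$ Euclidean there, so that the $ds^2_\varphi$-angle at $p$ of a subset of either sheet $\{\pm v>0\}$ equals the Euclidean angle of its $\pi$-image at $\pi(p)$. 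In these coordinates $\partial M$ (being transversal to $\Sigma_\varphi$) is a graph $u=a(v)$ with $a(0)=0$, and $D^+_\epsilon$ is the $\epsilon$-disk intersected with the side $\{u\ge a(v)\}$ or $\{u\le a(v)\}$. If the null direction is transversal to $\partial M$, then $a'(0)\ne0$, so to leading order $\partial M$ is the line $\{u=a'(0)v\}$, which is \emph{odd} in $v$; its $\pi$-image is a parabola tangent to the crease $\{w=0\}$, and — precisely because $a$ is odd to leading order — the two sheets $\{v>0\}$ and $\{v<0\}$ carry this parabola to opposite sides of the crease, so one of $\pi(D^+_\epsilon\cap M^\pm_\varphi)$ is the cuspidal region squeezed between the crease and the parabola, of Euclidean angle $0$, and the other is the complementary half-plane side, of Euclidean angle $\pi$. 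Hence $\alpha^+_\varphi(p)+\alpha^-_\varphi(p)=\pi$ and $\alpha^+_\varphi(p)-\alpha^-_\varphi(p)\in\{-\pi,\pi\}$. If instead the null direction coincides with the direction of $\partial M$, then $a'(0)=0$, so to leading order $\partial M$ is the parabola $\{u=a''(0)v^2/2\}$, which is \emph{even} in $v$; now both sheets carry it to the same curve (a ray from $\pi(p)$), so $\pi(D^+_\epsilon\cap M^+_\varphi)$ and $\pi(D^+_\epsilon\cap M^-_\varphi)$ are congruent wedges, giving $\alpha^+_\varphi(p)=\alpha^-_\varphi(p)$ and hence difference $0$. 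This settles the first-kind case.

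\emph{Second kind, and the main obstacle.} If $p$ is an admissible singular point of the second kind, the null direction equals the singular direction, which is transversal to $\partial M$ by hypothesis, so only the first alternative of the statement can occur and it remains to prove $\alpha^+_\varphi(p)-\alpha^-_\varphi(p)=\pm\pi$. Here $\gamma^+\setminus\{p\}$ consists of points of the first kind, so along it $ds^2_\varphi$ is of fold type, the fold crease being $\gamma$ (which is tangent to the null direction). Using the normal form of $ds^2_\varphi$ at an admissible second-kind point from \cite{3} and arguing essentially as in the proof of Proposition~\ref{prop2.12}, one finds that the fold sweeps one of the regions $D^+_\epsilon\cap M^\pm_\varphi$ onto a half-plane side (angle $\pi$) and pinches the other against $\gamma^+$ into a cuspidal region (angle $0$), so again $\alpha^+_\varphi(p)+\alpha^-_\varphi(p)=\pi$ and the difference is $\pm\pi$. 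It is precisely here that the transversality of $\Sigma_\varphi$ to $\partial M$ is used essentially: because $\gamma^+$ is only a half-curve issuing from $p$, the symmetric outcome (angle $\pi/2$ on each side) cannot occur — and it is that symmetric outcome which yields the value $0$ for the difference in Proposition~\ref{prop2.12}, where the singular curve runs fully through the interior point. I expect the main obstacle to be making the ``fold pinches one side to angle $0$'' argument rigorous uniformly over all admissible second-kind points, i.e.\ for every finite order of vanishing of $\delta(t)=\det(\gamma'(t),\eta(t))$ at $p$ (Definition~\ref{def2.8}): one either runs the corresponding normal-form computation of \cite{3}, or reduces to the cuspidal-cross-cap model $(x,y)\mapsto(x^3-xy,\,y)$ by a preliminary coordinate change together with a continuity argument. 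Deciding in each configuration whether the difference is $+\pi$ or $-\pi$, by tracking the orientations of $M$ and of $\mathcal{E}$, is routine bookkeeping and not needed for the stated conclusion.
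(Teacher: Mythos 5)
First, note that the paper does not prove this statement at all: it is quoted verbatim as \cite[Theorem 2.13]{1} (Domitrz--Zwierzy\'nski), so there is no in-paper argument to compare with; your attempt has to be measured against the cited source, which establishes the angle relations at boundary singular points using the machinery of \cite{3} (angles defined via limits of $\varphi$-unit tangents in the fiber $\mathcal{E}_p$), not by a bare normal-form computation for the degenerate metric. Your treatment of the first-kind case is essentially sound as a sketch: reducing to a fold-type model adapted to $\Sigma_\varphi$ and the null direction, and reading off that the transversal case gives the angle pair $(0,\pi)$ while the tangential case gives two congruent wedges, is the right picture, although the step ``only the $1$-jet of $ds^2_\varphi$ at $p$ enters'' is not justified as stated --- $ds^2_\varphi$ is degenerate at $p$, and angles at singular points are not controlled by low-order jets of the metric alone; one must work with the $\mathcal{E}_p$-valued limits of $\varphi(\dot c)/|\varphi(\dot c)|$, which is precisely how \cite{1,3} make such computations legitimate.

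The genuine gap is the admissible second-kind case, which is the heart of the proposition and which you assert rather than prove. Your stated reason for excluding the symmetric outcome --- ``because $\gamma^+$ is only a half-curve issuing from $p$, the angle $\pi/2$ on each side cannot occur'' --- is not an argument: a priori $\alpha^+_\varphi(p)+\alpha^-_\varphi(p)=\pi$ could split in any proportion, and the whole content of the statement is that the split is forced to be $(0,\pi)$ or $(\pi,0)$. Proving this requires controlling the limiting directions in $\mathcal{E}_p$ of $\varphi$ along the two halves of $\partial M$ (these are $\pm$ the unit vector spanning $\operatorname{Im}\varphi_p$, since $\partial M$ is transversal to the null direction) \emph{and} along the half singular arc $\gamma^+$, where $\varphi(\gamma'(t))\to 0$ and the existence and value of the limiting direction is exactly the delicate point treated in \cite{3} for non-degenerate peaks; none of this is carried out. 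Your proposed rescue routes are also shaky: $(x,y)\mapsto(x^3-xy,y)$ is a planar Whitney cusp, not the cuspidal cross-cap, a single model cannot cover all admissible second-kind points (the order $k$ of vanishing of $\delta$ in Definition~\ref{def2.8} is arbitrary), and a coherent tangent bundle need not be induced by any frontal, so a reduction-to-a-map-germ plus continuity argument does not obviously apply. As it stands, the proposal proves (modulo the jet issue) only the first-kind alternatives and defers precisely the part of the statement that cannot be obtained by mimicking Proposition~\ref{prop2.12}, where the difference $0$ \emph{is} possible.
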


\begin{definition}
\label{def2.15}

{\rm A singular point $p\in\partial M$ is called \textit{positive} (resp. \textit{null}, \textit{negative}) if
\begin{eqnarray*}
&&\alpha^+_\varphi(p)-\alpha^-_\varphi(p)=\pi\nonumber\\
&&\ \ \ \left(\mbox{resp}.\ \alpha^+_\varphi(p)-\alpha^-_\varphi(p)=0,\ \alpha^+_\varphi(p)-\alpha^-_\varphi(p)=-\pi\right).\label{2.23}
\end{eqnarray*}}
\end{definition}

Finally, we describe the Gauss-Bonnet type formulas for coherent tangent bundles over surfaces with boundary.

\begin{proposition}[\mbox{\cite[Theorem $2.20$]{1}}]
\label{prop2.16}

{\it Let $M$ be a compact oriented surface with boundary and $(M,\mathcal{E},\langle\cdot,\cdot\rangle,D,\varphi)$ a coherent tangent bundle over $M$. We suppose that $\varphi$ admits only singular points of the first kind and admissible singular points of the second kind, and that the set of singular points $\Sigma_\varphi$ is transversal to the boundary $\partial M$. Then, we have the following two formulas:
\begin{itemize}
\item[$(1)$]$\displaystyle{2\int_{\Sigma_\varphi}\kappa_s^\varphi ds_\varphi+\int_{\partial{M}}\kappa_g^\varphi ds_\varphi+\int_MK_\varphi dA_\varphi}$

$\displaystyle{=2\pi\chi(M)+\sum_{p\in(\Sigma_\varphi\cap\partial{M})^{\mathrm{null}}}\left(2\alpha^+_\varphi(p)-\pi\right)}$,
\item[$(2)$]$\displaystyle{\int_{\partial{M}\cap M^+_\varphi}\kappa_g^\varphi ds_\varphi-\int_{\partial{M}\cap M^-_\varphi}\kappa_g^\varphi ds_\varphi+\int_{M}K_\varphi d\widehat{A}_\varphi}$

$\displaystyle{=2\pi\left(\chi(M^+_\varphi)-\chi(M^-_\varphi)\right)+2\pi\left(\#S^+_\varphi-\#S^-_\varphi\right)}$

\ \ \ $\displaystyle{+\pi\left(\#(\Sigma_\varphi\cap\partial{M})^+-\#(\Sigma_\varphi\cap\partial{M})^-\right)}$,
\end{itemize}
where $\kappa_g^\varphi$ is the geodesic curvature, $S^+_\varphi$ (resp. $S^-_\varphi$) is the set of positive (resp. negative) singular points of the second kind in $M\backslash\partial M$, and $(\Sigma_\varphi\cap\partial{M})^+$ (resp. $(\Sigma_\varphi\cap\partial{M})^{\mathrm{null}}$, $(\Sigma_\varphi\cap\partial{M})^-$) is the set of positive (resp. null, negative) singular points on $\partial M$.}
\end{proposition}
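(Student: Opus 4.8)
The plan is to derive both formulas from the classical Gauss--Bonnet theorem with boundary by excising a shrinking tubular neighborhood of the singular set and passing to the limit, following the scheme used by Saji--Umehara--Yamada in the closed case \cite{3} and supplementing it with a local analysis at the singular points lying on $\partial M$. I would start by fixing an auxiliary Riemannian metric on $M$, a positive orthonormal frame $\{\bm{e}_1,\bm{e}_2\}$ of $\mathcal{E}$, and the associated connection form $\mu$, so that $d\mu=K_\varphi\,d\widehat{A}_\varphi$ is a globally defined smooth $2$-form; then I would triangulate $M$ compatibly with $\Sigma_\varphi$, taking each singular curve to be a union of edges, each admissible singular point of the second kind in $M\backslash\partial M$ and each singular point on $\partial M$ to be a vertex, and each $2$-simplex to lie in $\overline{M^+_\varphi}$ or in $\overline{M^-_\varphi}$.

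For formula $(1)$ I would let $N_\varepsilon$ be the $\varepsilon$-neighborhood of $\Sigma_\varphi$ and use that, on $M\backslash N_\varepsilon$, the first fundamental form $ds^2_\varphi$ is a genuine Riemannian metric, with Gaussian curvature $K_\varphi$, area form $dA_\varphi$, and geodesic curvature $\kappa_g^\varphi$; the classical Gauss--Bonnet theorem applied to $M\backslash N_\varepsilon$ (summed over the triangles of the chosen triangulation) gives
\begin{equation*}
\int_{M\backslash N_\varepsilon}K_\varphi\,dA_\varphi+\int_{\partial(M\backslash N_\varepsilon)}\kappa_g^\varphi\,ds_\varphi+\sum_j\beta_j=2\pi\chi(M\backslash N_\varepsilon),
\end{equation*}
with $\beta_j$ the exterior angles at the corners, and I would then let $\varepsilon\to0$. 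The interior integral converges to $\int_MK_\varphi\,dA_\varphi$ by dominated convergence, since $K_\varphi\,dA_\varphi=\pm d\mu$ with the sign locally constant on $M^\pm_\varphi$ and $d\mu$ is smooth, hence bounded. The boundary $\partial(M\backslash N_\varepsilon)$ splits into the part of $\partial M$ outside $N_\varepsilon$, whose integral tends to $\int_{\partial M}\kappa_g^\varphi\,ds_\varphi$, and the inner boundary running alongside $\Sigma_\varphi$; the crux is the local estimate of the latter, namely that along a singular curve $\gamma$ of the first kind the inner boundary has one strand on each of the two sides and the total of their geodesic-curvature integrals, together with the relevant exterior angles, converges to $-2\int_\gamma\kappa_s^\varphi\,ds_\varphi$ (with Proposition $\ref{prop2.11}$ guaranteeing boundedness near an endpoint of $\gamma$ at a second-kind point), while near an admissible singular point of the second kind in $M\backslash\partial M$ the inner boundary pinches with vanishing net angular contribution, and near a singular point $p$ on $\partial M$, Proposition $\ref{prop2.14}$ makes the residual angular contribution equal to $0$ when the null direction is transverse to $\partial M$ and equal to $2\alpha^+_\varphi(p)-\pi$ when it is tangent. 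Collecting these limits, together with the elementary computation of $\chi(M\backslash N_\varepsilon)$ in terms of $\chi(M)$, yields $(1)$.

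For formula $(2)$ I would instead apply the classical Gauss--Bonnet theorem to $M^+_\varphi\backslash N_\varepsilon$ and to $M^-_\varphi\backslash N_\varepsilon$ separately and subtract the two identities. Since $d\widehat{A}_\varphi=dA_\varphi$ on $M^+_\varphi$ and $d\widehat{A}_\varphi=-dA_\varphi$ on $M^-_\varphi$, the two interior integrals combine in the limit into $\int_MK_\varphi\,d\widehat{A}_\varphi$; along $\Sigma_\varphi$ the two strands now enter the subtraction with opposite signs, so the singular-curvature contributions cancel (which is why $\kappa_s^\varphi$ is absent from $(2)$); the parts on $\partial M$ give $\int_{\partial M\cap M^+_\varphi}\kappa_g^\varphi\,ds_\varphi-\int_{\partial M\cap M^-_\varphi}\kappa_g^\varphi\,ds_\varphi$; an interior singular point of the second kind is a corner of $\overline{M^+_\varphi}$ with interior angle $\alpha^+_\varphi$ and of $\overline{M^-_\varphi}$ with interior angle $\alpha^-_\varphi$, so by Proposition $\ref{prop2.12}$ and Definition $\ref{def2.13}$ the difference of exterior-angle contributions is $\alpha^-_\varphi-\alpha^+_\varphi\in\{-2\pi,0,2\pi\}$, yielding the term $2\pi(\#S^+_\varphi-\#S^-_\varphi)$; and by Proposition $\ref{prop2.14}$ and Definition $\ref{def2.15}$ a singular point on $\partial M$ yields $\pi(\#(\Sigma_\varphi\cap\partial M)^+-\#(\Sigma_\varphi\cap\partial M)^-)$. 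Since $\chi(M^\pm_\varphi)$ are unchanged in the limit, rearranging gives $(2)$.

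The step I expect to be the main obstacle is this local analysis near the singular set: identifying precisely the limiting geodesic-curvature contributions of the inner boundary --- the factor $2$ and the sign in $(1)$, and the cancellation in $(2)$ --- and, above all, handling a singular point on $\partial M$, which is not treated in \cite{3} and requires a local normal form there together with the interior-angle relations of Proposition $\ref{prop2.14}$; the remaining work is bookkeeping of Euler characteristics and exterior angles.
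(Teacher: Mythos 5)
The paper does not prove this proposition at all: it is imported verbatim from Domitrz--Zwierzy\'{n}ski \cite[Theorem $2.20$]{1} (see Remark $\ref{rem2.18}$), so there is no in-paper argument to compare yours against; the only ``proof'' in the paper is the citation. Your outline reconstructs the strategy of the cited sources (Saji--Umehara--Yamada \cite{3} in the closed case, extended to the boundary case in \cite{1}): triangulate compatibly with $\Sigma_\varphi$, excise an $\varepsilon$-neighbourhood of the singular set, apply the classical Gauss--Bonnet theorem on the complement, let $\varepsilon\to0$, and convert the corner data via Propositions $\ref{prop2.12}$ and $\ref{prop2.14}$; the splitting into $(1)$ (sum over $M^\pm_\varphi$, producing $dA_\varphi$ and the factor $2\int_{\Sigma_\varphi}\kappa_s^\varphi ds_\varphi$) versus $(2)$ (difference, producing $d\widehat{A}_\varphi$ and the cancellation of the singular curvature) is exactly how these two formulas arise there. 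So the route is the correct and standard one.

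That said, as a standalone proof your text is a scheme rather than an argument: the steps you yourself flag as ``the main obstacle'' --- that the inner boundary's geodesic-curvature integrals converge to $-2\int_{\Sigma_\varphi}\kappa_s^\varphi ds_\varphi$ with that sign and factor, that admissible second-kind interior points contribute nothing to $(1)$ and $2\pi(\#S^+_\varphi-\#S^-_\varphi)$ to $(2)$, that a boundary singular point contributes $2\alpha^+_\varphi(p)-\pi$ exactly when the null direction is tangent to $\partial M$ and $\pm\pi$ otherwise, and even the integrability of $\kappa_g^\varphi ds_\varphi$ across a null boundary singular point (Remark $\ref{rem2.17}$, \cite[Proposition $2.19$]{1}) --- constitute essentially the entire content of the cited theorem, and none of them is carried out. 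These are genuine gaps if the proposition were to be proved from scratch, but they are precisely the gaps the paper elects to fill by citation; given that, the appropriate course here is either to cite \cite[Theorem $2.20$]{1} as the paper does, or to commit to the full local analysis near $\Sigma_\varphi$ and $\Sigma_\varphi\cap\partial M$ that your sketch postpones.
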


\begin{remark}
\label{rem2.17}

{\rm We briefly explain the well-definedness of the integral of the geodesic curvature $\kappa_g^\varphi$ on the boundary $\partial M$ in Proposition $\ref{prop2.16}$. The set of singular points $\Sigma_\varphi$ is assumed to be transversal to $\partial M$. Then, the null direction at a singular point $p$ on $\partial M$ may be either different from or the same as the direction of $\partial M$. If these directions are different at $p$, it is easy to check that $\kappa_g^\varphi$ is bounded at $p$, but if the same, $\kappa_g^\varphi$ is not defined at $p$. However, if we consider the geodesic curvature measure $\kappa_g^\varphi ds_\varphi$, we can show that it is a bounded $1$-form at $p$ (see \cite[Proposition $2.19$]{1}). Thus, the integral of $\kappa_g^\varphi$ on $\partial M$ is well-defined.}
\end{remark}

\begin{remark}
\label{rem2.18}

{\rm Proposition $\ref{prop2.16}$ corresponds to the assertion of \cite[Theorem $2.20$]{1} when all peaks are non-degenerate.}
\end{remark}

\section{Frontal bundles and four Gauss-Bonnet type formulas}\label{sec3}

Let $M$ be an oriented surface (possibly with boundary) and $(M,\mathcal{E},\langle\cdot,\cdot\rangle,D,\varphi)$ a coherent tangent bundle over $M$.

\begin{definition}
\label{def3.1}

{\rm If there exists another bundle homomorphism $\psi:TM\to\mathcal{E}$ such that $(M,\mathcal{E},\langle\cdot,\cdot\rangle,D,\psi)$ is a coherent tangent bundle over $M$ and the pair $(\varphi,\psi)$ of bundle homomorphisms satisfies
\begin{equation*}
\langle\varphi(X),\psi(Y)\rangle=\langle\varphi(Y),\psi(X)\rangle\ \ \ (X,Y\in TM),\label{3.1}
\end{equation*}
then a $6$-tuple $(M,\mathcal{E},\langle\cdot,\cdot\rangle,D,\varphi,\psi)$ is called a \textit{frontal bundle} over $M$ The bundle homomorphisms $\varphi$ and $\psi$ are called the \textit{first homomorphism} and the \textit{second homomorphism}, respectively.

Furthermore, a frontal bundle $(M,\mathcal{E},\langle\cdot,\cdot\rangle,D,\varphi,\psi)$ is a \textit{front bundle} if for each $p\in M$,
\begin{equation*}
\left(\ker\varphi_p\right)\cap\left(\ker\psi_p\right)=\{0\}\label{3.2}
\end{equation*}
holds.}
\end{definition}

We give a typical example with the structure of a frontal bundle.

\begin{example}[\mbox{\cite[Example $2.14$]{4}}]
\label{ex3.2}

{\rm Let $(N,g)$ be a Riemannian $3$-manifold and $\nabla$ a Levi-Civita connection on $N$. If $f:M\to N$ is a co-orientable frontal, then there exists a globally defined unit normal vector field $\nu:M\to T_1N$ on $M$. Since the coherent tangent bundle $(M,\mathcal{E}_f,\langle\cdot,\cdot\rangle,D,\varphi_f)$ given in Example $\ref{ex2.2}$ is orthogonal to $\nu$, a bundle homomorphism $\psi_f:TM\to\mathcal{E}_f,\ \psi_f(X):=D_X\nu$ is defined. We can check that these bundle homomorphisms $\varphi_f$ and $\psi_f$ satisfy (see \cite[Proposition $2.4$]{2})
\begin{equation*}
\langle\varphi_f(X),\psi_f(Y)\rangle=\langle\varphi_f(Y),\psi_f(X)\rangle\ \ \ (X,Y\in TM).\label{3.3}
\end{equation*}
Hence, $(M,\mathcal{E}_f,\langle\cdot,\cdot\rangle,D,\varphi_f,\psi_f)$ is a frontal bundle over $M$.

Furthermore, a frontal bundle $(M,\mathcal{E}_f,\langle\cdot,\cdot\rangle,D,\varphi_f,\psi_f)$ being a front bundle is equivalent to a frontal $f:M\to N$ being a front.}
\end{example}

We fix a frontal bundle $(M,\mathcal{E},\langle\cdot,\cdot\rangle,D,\varphi,\psi)$ over $M$.

\begin{definition}
\label{def3.3}

{\rm We define symmetric covariant tensor fields $\mathrm{I}$, $\mathrm{II}$, and $\mathrm{III}$ on $M$ as
\begin{eqnarray*}
\mathrm{I}(X,Y)&:=&ds^2_\varphi(X,Y)=\langle\varphi(X),\varphi(Y)\rangle,\nonumber\\
\mathrm{II}(X,Y)&:=&-\langle\varphi(X),\psi(Y)\rangle=\left(-\langle\varphi(Y),\psi(X)\rangle\right),\nonumber\\
\mathrm{III}(X,Y)&:=&ds_\psi^2(X,Y)=\langle\psi(X),\psi(Y)\rangle,\label{3.4}
\end{eqnarray*}
respectively, for any vector fields $X,Y$ on $M$. We call $\mathrm{I}$, $\mathrm{II}$, and $\mathrm{III}$ the \textit{first fundamental form}, the \textit{second fundamental form}, and the \textit{third fundamental form}, respectively.

When $p\in M$ is a regular point of $\varphi$, we define the \textit{extrinsic curvature} $K^{\mathrm{ext}}$ by
\begin{eqnarray}
K^{\mathrm{ext}}:=\frac{\mathrm{II}(X,X)\mathrm{II}(Y,Y)-\mathrm{II}(X,Y)^2}{\mathrm{I}(X,X)\mathrm{I}(Y,Y)-\mathrm{I}(X,Y)^2}\ \ \ (X,Y\in T_pM).\label{3.5}
\end{eqnarray}}
\end{definition}

We fix a local coordinate system $(U;u,v)$ compatible with the orientation of $M$.

We denote by $\lambda$ (resp. $\lambda_\star$) the area density function with respect to $\varphi$ (resp. $\psi$) (see $(\ref{2.6})$):
\begin{equation*}
\lambda:=\lambda_\varphi=dA_{\mathcal{E}}\left(\varphi_u,\varphi_v\right),\ \lambda_\star:=\lambda_\psi=dA_{\mathcal{E}}\left(\psi_u,\psi_v\right),\label{3.6}
\end{equation*}
where $\varphi_u=\varphi\left(\frac{\partial}{\partial u}\right)$, $\varphi_v=\varphi\left(\frac{\partial}{\partial v}\right)$, $\psi_u=\psi\left(\frac{\partial}{\partial u}\right)$, $\psi_v=\psi\left(\frac{\partial}{\partial v}\right)$, and $dA_{\mathcal{E}}$ is the area form of $\mathcal{E}$ (see $(\ref{2.4})$). Then, the set of singular points $\Sigma:=\Sigma_\varphi$ (resp. $\Sigma_\star:=\Sigma_\psi$) is represented on $U$ as
\begin{equation*}
\Sigma\cap U=\left\{p\in U\mid\lambda(p)=0\right\},\ \Sigma_\star\cap U=\left\{p\in U\mid\lambda_\star(p)=0\right\}.\label{3.7}
\end{equation*}

We denote by $d\widehat{A}$ and $dA$ (resp. $d\widehat{A}_\star$ and $dA_\star$) the signed area and (unsigned) area forms with respect to $\varphi$ (resp. $\psi$), respectively (see $(\ref{2.5})$, $(\ref{2.9})$):
\begin{gather*}
d\widehat{A}:=d\widehat{A}_\varphi=\varphi^*(dA_{\mathcal{E}}),\ dA:=dA_\varphi=|\lambda|du\wedge dv,\nonumber\\
d\widehat{A}_\star:=d\widehat{A}_\psi=\psi^*(dA_{\mathcal{E}}),\ dA_\star:=dA_\psi=|\lambda_\star|du\wedge dv.\label{3.8}
\end{gather*}
The two subsets $M^\pm:=M^\pm_\varphi$ (resp. $M^\pm_\star:=M^\pm_\psi$) of $M$ defined by $d\widehat{A}$ and $dA$ (resp. $d\widehat{A}_\star$ and $dA_\star$)  are expressed as (see $(\ref{2.10})$)
\begin{eqnarray*}
&&M^\pm:=\left\{p\in M\backslash\Sigma\mid d\widehat{A}=\pm dA\right\}\nonumber\\
&&\ \ \ \left(\mbox{resp}.\ M^\pm_\star:=\left\{p\in M\backslash\Sigma_\star\mid d\widehat{A}_\star=\pm dA_\star\right\}\right).\label{3.9}
\end{eqnarray*}

We denote by $K$ (resp. $K_\star $) the Gaussian curvature with respect to $\mathrm{I}$ (resp. $\mathrm{III}$) defined on $M\backslash\Sigma$ (resp. $M\backslash\Sigma_\star$). Also, we denote by $K^{\mathrm{ext}}$ (resp. $K^{\mathrm{ext}}_\star$) the extrinsic curvature with respect to $\varphi$ (resp. $\psi$) (see $(\ref{3.5})$):
\begin{eqnarray*}
K^{\mathrm{ext}}&:=&\frac{\mathrm{II}(X,X)\mathrm{II}(Y,Y)-\mathrm{II}(X,Y)^2}{\mathrm{I}(X,X)\mathrm{I}(Y,Y)-\mathrm{I}(X,Y)^2}\ \ \ (p\in M\backslash\Sigma,\ X,Y\in T_pM),\nonumber\\
K^{\mathrm{ext}}_\star&:=&\frac{\mathrm{II}(X,X)\mathrm{II}(Y,Y)-\mathrm{II}(X,Y)^2}{\mathrm{III}(X,X)\mathrm{III}(Y,Y)-\mathrm{III}(X,Y)^2}\ \ \ (p\in M\backslash\Sigma_\star,\ X,Y\in T_pM).\label{3.10}
\end{eqnarray*}

The following assertions regarding area forms, the Gaussian curvatures, and the extrinsic curvatures hold.

\begin{lemma}[\mbox{\cite[Lemma $3.10$]{4}}]
\label{lem3.4}

{\it The following holds:
\begin{itemize}
\item[$(1)$]$Kd\widehat{A}=K_\star d\widehat{A}_\star$,
\item[$(2)$]$K^{\mathrm{ext}}d\widehat{A}=d\widehat{A}_\star$ and $K^{\mathrm{ext}}_\star d\widehat{A}_\star=d\widehat{A}$,
\item[$(3)$]$|K^{\mathrm{ext}}|dA=dA_\star$ and $|K^{\mathrm{ext}}_\star|dA_\star=dA$,
\item[$(4)$]$K^{\mathrm{ext}}K^{\mathrm{ext}}_\star=1$.
\end{itemize}}
\end{lemma}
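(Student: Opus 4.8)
The plan is to work in a local coordinate system $(U;u,v)$ compatible with the orientation of $M$, fix a positive orthonormal frame $\{\bm{e}_1,\bm{e}_2\}$ on $\mathcal{E}|_U$ with dual basis $\{\omega_1,\omega_2\}$, and express everything in terms of the matrix representations of $\varphi$ and $\psi$ relative to these frames. Writing $\varphi_u = a_{11}\bm{e}_1 + a_{21}\bm{e}_2$, $\varphi_v = a_{12}\bm{e}_1 + a_{22}\bm{e}_2$ and similarly $\psi_u = b_{11}\bm{e}_1 + b_{21}\bm{e}_2$, $\psi_v = b_{12}\bm{e}_1 + b_{22}\bm{e}_2$, we have $\lambda = \det A$ and $\lambda_\star = \det B$ where $A = (a_{ij})$ and $B = (b_{ij})$, so that $d\widehat{A} = \lambda\, du\wedge dv$ and $d\widehat{A}_\star = \lambda_\star\, du\wedge dv$. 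The key algebraic input is the compatibility condition $\langle\varphi(X),\psi(Y)\rangle = \langle\varphi(Y),\psi(X)\rangle$, which applied to $X=\partial_u$, $Y=\partial_v$ says that the matrix ${}^tA\,B$ is symmetric, equivalently $\mathrm{II}$ has a well-defined symmetric matrix representation, say $H = {}^tA\,B = {}^tB\,A$ with the sign convention $\mathrm{II} = -\langle\varphi(\cdot),\psi(\cdot)\rangle$ giving matrix $-H$.

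For part $(2)$, I would compute the matrix of $\mathrm{I}$ as ${}^tA\,A$ and of $\mathrm{II}$ as $-{}^tA\,B$, so that by $(\ref{3.5})$, $K^{\mathrm{ext}} = \det(-{}^tA B)/\det({}^tA A) = \det(A)^{-1}\det(B)\cdot(\text{sign bookkeeping}) = \lambda_\star/\lambda$; the squared signs cancel correctly because $\det(-{}^tAB) = \det A \det B$ in $2\times 2$ (the factor $(-1)^2 = 1$). Hence $K^{\mathrm{ext}}\lambda = \lambda_\star$ on $M\backslash\Sigma$, i.e. $K^{\mathrm{ext}} d\widehat{A} = d\widehat{A}_\star$; since both sides are continuous $2$-forms that agree on the dense open set $M\backslash\Sigma$, they agree everywhere (interpreting $K^{\mathrm{ext}}d\widehat{A}$ via this continuous extension). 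Symmetrically, computing $K^{\mathrm{ext}}_\star = \det(\mathrm{II})/\det(\mathrm{III}) = \lambda/\lambda_\star$ on $M\backslash\Sigma_\star$ gives $K^{\mathrm{ext}}_\star d\widehat{A}_\star = d\widehat{A}$. Part $(4)$ is then immediate on the set of common regular points $M\backslash(\Sigma\cup\Sigma_\star)$ by multiplying the two identities $K^{\mathrm{ext}} = \lambda_\star/\lambda$ and $K^{\mathrm{ext}}_\star = \lambda/\lambda_\star$. Part $(3)$ follows from $(2)$ by taking absolute values of the area density functions: $dA = |\lambda|\,du\wedge dv$ and $dA_\star = |\lambda_\star|\,du\wedge dv$, so $|K^{\mathrm{ext}}|\,dA = |\lambda_\star/\lambda|\cdot|\lambda|\,du\wedge dv = |\lambda_\star|\,du\wedge dv = dA_\star$, and symmetrically for the starred version.

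For part $(1)$, the cleanest route is to use the Gauss equation formulation: for a coherent tangent bundle, $d\mu = K_\varphi d\widehat{A}_\varphi$ where $\mu$ is the connection form of $D$ on $\mathcal{E}|_U$, as recalled after Definition $\ref{def2.5}$. Crucially, the connection form $\mu$ depends only on the connection $D$ and the orthonormal frame $\{\bm{e}_1,\bm{e}_2\}$ of $\mathcal{E}$ — not on $\varphi$ or $\psi$. Therefore $K\,d\widehat{A} = d\mu = K_\star\,d\widehat{A}_\star$ as $2$-forms on $M\backslash(\Sigma\cup\Sigma_\star)$, and since both $K\,d\widehat{A}$ and $K_\star\,d\widehat{A}_\star$ extend smoothly to the global $2$-form $d\mu$ on all of $M$ (again by the remark following Definition $\ref{def2.5}$ applied to each coherent tangent bundle structure), the identity $(1)$ holds on all of $M$ in this extended sense.

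The main obstacle is bookkeeping rather than conceptual depth: one must be careful about which identities hold pointwise on $M\backslash\Sigma$ versus $M\backslash\Sigma_\star$ versus the intersection, and how the "continuous/smooth extension" convention for $K^{\mathrm{ext}}d\widehat{A}$ and $Kd\widehat{A}$ is being used — the $2$-forms $d\widehat{A}$, $d\widehat{A}_\star$, $d\mu$ are globally defined and smooth, whereas the scalar functions $K$, $K_\star$, $K^{\mathrm{ext}}$, $K^{\mathrm{ext}}_\star$ blow up or become undefined along the respective singular sets, so all four identities should be read as statements about the globally defined forms obtained by extension. Once this is made precise, each identity reduces to the $2\times 2$ determinant computation with the symmetric matrix $H = {}^tAB$, plus the observation that $\mu$ is a property of $(\mathcal{E}, D)$ alone.
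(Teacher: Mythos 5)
The paper does not prove Lemma \ref{lem3.4} at all: it is imported verbatim from \cite[Lemma 3.10]{4}, so there is no in-paper argument to compare against. Your blind proof is correct and is essentially the standard one from the cited source: writing $\varphi_u,\varphi_v,\psi_u,\psi_v$ in a positive orthonormal frame gives matrices $A,B$ with $\lambda=\det A$, $\lambda_\star=\det B$, the compatibility condition makes ${}^tAB$ symmetric so that the numerator in $(\ref{3.5})$ is $\det(-{}^tAB)=\det A\det B$, whence $K^{\mathrm{ext}}=\lambda_\star/\lambda$ on $M\backslash\Sigma$ and $K^{\mathrm{ext}}_\star=\lambda/\lambda_\star$ on $M\backslash\Sigma_\star$, which yields $(2)$--$(4)$; and $(1)$ follows because the Gauss equation $d\mu=K_\varphi d\widehat{A}_\varphi$ stated after Definition \ref{def2.5} applies to both coherent tangent bundle structures with the \emph{same} connection form $\mu$, so $Kd\widehat{A}=d\mu=K_\star d\widehat{A}_\star$ globally via the smooth extension by $d\mu$. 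One small caveat: your appeal to density of $M\backslash\Sigma$ is not available for a general frontal bundle ($\varphi$ could vanish identically on an open set, in which case $\Sigma$ has interior and $K^{\mathrm{ext}}d\widehat{A}=d\widehat{A}_\star$ can genuinely fail there), so $(2)$ and $(4)$ should simply be read on the regular sets where the extrinsic curvatures are defined; this is exactly how the lemma is used later (e.g.\ in $(\ref{5.11})$ and in the integrals over $M^{\pm}$, $M^{\pm}_\star$), and under the non-degeneracy hypotheses of Theorems \ref{thm4.1} and \ref{thm5.2} the singular sets are curves, so the issue disappears there anyway.
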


Finally, we describe four Gauss-Bonnet type formulas obtained naturally from a frontal bundle.

\begin{theorem}[The Gauss-Bonnet type formulas for frontal bundles]
\label{thm3.5}

{\it Let $M$ be a compact oriented surface with boundary and $(M,\mathcal{E},\langle\cdot,\cdot\rangle,D,\varphi,\psi)$ a frontal bundle over $M$. We suppose that $\varphi$ and $\psi$ admit only singular points of the first kind and admissible singular points of the second kind, and that the set of singular points $\Sigma$ and $\Sigma_\star$ are transversal to the boundary $\partial M$. Then, we have the following four formulas:
\begin{itemize}
\item[$(1)$]$\displaystyle{2\int_{\Sigma}\kappa_sds+\int_{\partial{M}}\kappa_gds+\int_MKdA}$

$\displaystyle{=2\pi\chi(M)+\sum_{p\in(\Sigma\cap\partial{M})^{\mathrm{null}}}\left(2\alpha^+(p)-\pi\right)}$,
\item[$(2)$]$\displaystyle{\int_{\partial{M}\cap M^+}\kappa_gds-\int_{\partial{M}\cap M^-}\kappa_gds+\int_MKd\widehat{A}}$

$\displaystyle{=2\pi\left(\chi(M^+)-\chi(M^-)\right)+2\pi\left(\#S^+-\#S^-\right)}$

\ \ \ $\displaystyle{+\pi\left(\#(\Sigma\cap\partial{M})^+-\#(\Sigma\cap\partial{M})^-\right)}$,
\item[$(3)$]$\displaystyle{2\int_{\Sigma_\star}\kappa_s^\star ds_\star+\int_{\partial{M}}\kappa_g^\star ds_\star+\int_MK_\star dA_\star}$

$\displaystyle{=2\pi\chi(M)+\sum_{p\in(\Sigma_\star\cap\partial{M})^{\mathrm{null}}}\left(2\alpha^+_\star(p)-\pi\right)}$,
\item[$(4)$]$\displaystyle{\int_{\partial{M}\cap M^+_\star}\kappa_g^\star ds_\star-\int_{\partial{M}\cap M^-_\star}\kappa_g^\star ds_\star+\int_MK_\star d\widehat{A}_\star}$

$\displaystyle{=2\pi\left(\chi(M^+_\star)-\chi(M^-_\star)\right)+2\pi\left(\#S^+_\star-\#S^-_\star\right)}$

\ \ \ $\displaystyle{+\pi\left(\#(\Sigma_\star\cap\partial{M})^+-\#(\Sigma_\star\cap\partial{M})^-\right)}$,
\end{itemize}
where $\kappa_sds$ (resp. $\kappa_s^\star ds_\star$) is the singular curvature measure along $\Sigma$ (resp. $\Sigma_\star$), $\kappa_gds$ (resp. $\kappa_g^\star ds_\star$) is the geodesic curvature measure with respect to $\varphi$ (resp. $\psi$), $S^+$ (resp. $S^+_\star$) is the set of positive singular points of the second kind in $\Sigma\backslash\partial{M}$ (resp. $\Sigma_\star\backslash\partial{M}$), $S^-$ (resp. $S^-_\star$) is the set of negative singular points of the second kind in $\Sigma\backslash\partial{M}$ (resp. $\Sigma_\star\backslash\partial{M}$), $(\Sigma\cap\partial{M})^+$ (resp. $(\Sigma_\star\cap\partial{M})^+$) is the set of positive singular points on $\Sigma\cap\partial{M}$ (resp. $\Sigma_\star\cap\partial{M}$), $(\Sigma\cap\partial{M})^-$ (resp. $(\Sigma_\star\cap\partial{M})^-$) is the set of negative singular points on $\Sigma\cap\partial{M}$ (resp. $\Sigma_\star\cap\partial{M}$), $(\Sigma\cap\partial{M})^{\mathrm{null}}$ (resp. $(\Sigma_\star\cap\partial{M})^{\mathrm{null}}$) is the set of null singular points on $\Sigma\cap\partial{M}$ (resp. $\Sigma_\star\cap\partial{M}$), and $\alpha^+(p)$ (resp. $\alpha^+_\star(p)$) is the sum of the interior angles on the $M^+$ (resp. $M^+_\star$) side at $p\in(\Sigma\cap\partial{M})^{\mathrm{null}}$ (resp. $p\in(\Sigma_\star\cap\partial{M})^{\mathrm{null}}$).}
\end{theorem}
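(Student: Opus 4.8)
The plan is to exploit the fact that a frontal bundle $(M,\mathcal{E},\langle\cdot,\cdot\rangle,D,\varphi,\psi)$ carries \emph{two} coherent tangent bundles built on the same triple $(\mathcal{E},\langle\cdot,\cdot\rangle,D)$: the coherent tangent bundle $(M,\mathcal{E},\langle\cdot,\cdot\rangle,D,\varphi)$ with first homomorphism $\varphi$, and the coherent tangent bundle $(M,\mathcal{E},\langle\cdot,\cdot\rangle,D,\psi)$ with first homomorphism $\psi$, the latter being a coherent tangent bundle by Definition~\ref{def3.1}. Since $M$ is a compact oriented surface with boundary, all four formulas follow by applying Proposition~\ref{prop2.16} to each of these two coherent tangent bundles and rewriting the output in the notation fixed in \S\ref{sec3}. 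There is no new analytic content; the substantive work is already packaged in Proposition~\ref{prop2.16}.

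First I would treat $\varphi$. By hypothesis $\varphi$ admits only singular points of the first kind and admissible singular points of the second kind, and $\Sigma=\Sigma_\varphi$ is transversal to $\partial M$; these are precisely the hypotheses of Proposition~\ref{prop2.16}. Applying Proposition~\ref{prop2.16}$(1)$ and $(2)$ to $(M,\mathcal{E},\langle\cdot,\cdot\rangle,D,\varphi)$ and substituting the abbreviations $\kappa_s=\kappa_s^\varphi$, $\kappa_g=\kappa_g^\varphi$, $K=K_\varphi$, $dA=dA_\varphi$, $d\widehat{A}=d\widehat{A}_\varphi$, $M^\pm=M^\pm_\varphi$, $S^\pm=S^\pm_\varphi$, $\alpha^+=\alpha^+_\varphi$, together with the analogous identifications for the positive, null, and negative boundary singular points, yields formulas $(1)$ and $(2)$ verbatim.

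Next I would run the identical argument for $\psi$: by hypothesis $\psi$ admits only singular points of the first kind and admissible singular points of the second kind, and $\Sigma_\star=\Sigma_\psi$ is transversal to $\partial M$, so Proposition~\ref{prop2.16}$(1)$ and $(2)$ apply to $(M,\mathcal{E},\langle\cdot,\cdot\rangle,D,\psi)$ and give formulas $(3)$ and $(4)$ after replacing every $\varphi$-quantity by its starred counterpart. Here one uses that the Euler characteristic $\chi(M)$ on the right-hand side of Proposition~\ref{prop2.16}$(1)$ is intrinsic to $M$, hence the same whether the formula is applied to $\varphi$ or to $\psi$, which is why both $(1)$ and $(3)$ carry $2\pi\chi(M)$.

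The only point requiring care is bookkeeping: one must confirm that the notions attached to $\psi$ in \S\ref{sec3} (the signed area density $\lambda_\star$, the sets $M^\pm_\star$, the second-kind angle sums $\alpha^\pm_\star$, the classification of boundary singular points of $\psi$ into positive, null, negative, and the measures $\kappa_s^\star ds_\star$ and $\kappa_g^\star ds_\star$, whose boundedness at boundary singular points is guaranteed as in Remark~\ref{rem2.17}) are exactly the specializations to the coherent tangent bundle $(M,\mathcal{E},\langle\cdot,\cdot\rangle,D,\psi)$ of the corresponding notions in \S\ref{sec2}. This is the ``main obstacle,'' though it is purely notational. The compatibility condition $\langle\varphi(X),\psi(Y)\rangle=\langle\varphi(Y),\psi(X)\rangle$ is not used at all in this theorem; it becomes relevant only when the four formulas are combined in \S\ref{sec4}.
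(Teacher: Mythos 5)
Your proposal is correct and matches the paper's own proof: the paper likewise observes that the frontal bundle consists of the two coherent tangent bundles $(M,\mathcal{E},\langle\cdot,\cdot\rangle,D,\varphi)$ and $(M,\mathcal{E},\langle\cdot,\cdot\rangle,D,\psi)$ and obtains $(1)$--$(4)$ by applying Proposition~\ref{prop2.16} to each. Your additional remarks on notation and on the compatibility condition being unused here are accurate but not needed beyond what the paper states.
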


\begin{proof}

A frontal bundle $(M,\mathcal{E},\langle\cdot,\cdot\rangle,D,\varphi,\psi)$ consists of two coherent tangent bundles $(M,\mathcal{E},\langle\cdot,\cdot\rangle,D,\varphi)$ and $(M,\mathcal{E},\langle\cdot,\cdot\rangle,D,\psi)$ over $M$. Therefore, by applying Proposition $\ref{prop2.16}$ to each coherent tangent bundle, we obtain the formulas $(1)$-$(4)$.\qed
\end{proof}

\section{New formulas derived from four Gauss-Bonnet type formulas}\label{sec4}

By appropriately combining the Gauss-Bonnet type formulas in Theorem $\ref{thm3.5}$, we obtain the following assertion.

\begin{theorem}
\label{thm4.1}

{\it Let $M$ be a compact oriented surface with boundary and $(M,\mathcal{E},\langle\cdot,\cdot\rangle,D,\varphi,\psi)$ a frontal bundle over $M$. We suppose that $\varphi$ and $\psi$ allow only singular points of the first kind and admissible singular points of the second kind, and that the sets of singular points $\Sigma:=\Sigma_\varphi$ and $\Sigma_\star:=\Sigma_\psi$ are transversal to the boundary $\partial{M}$. Then, we have the following four formulas:
\begin{itemize}
\item[$(1)$]$\displaystyle{4\pi\chi(M^-_\star)=2\int_{\Sigma}\kappa_sds+\int_{\partial{M}}\kappa_gds}$

\ \ \ $\displaystyle{-2\int_{M^-}Kd\widehat{A}-\sum_{p\in(\Sigma\cap\partial{M})^{\mathrm{null}}}\left(2\alpha^+(p)-\pi\right)}$

\ \ \ $\displaystyle{-\left(\int_{\partial{M}\cap M^+_\star}\kappa_g^\star ds_\star-\int_{\partial{M}\cap M^-_\star}\kappa_g^\star ds_\star\right)+2\pi\left(\#S^+_\star-\#S^-_\star\right)}$

\ \ \ $\displaystyle{-\pi\left(\#(\Sigma_\star\cap\partial{M})^{\mathrm{null}}+2\#(\Sigma_\star\cap\partial{M})^-\right)}$,
\item[$(2)$]$\displaystyle{4\pi\chi(M^-)=2\int_{\Sigma_\star}\kappa_s^\star ds_\star+\int_{\partial{M}}\kappa_g^\star ds_\star}$

\ \ \ $\displaystyle{-2\int_{M^-_\star}K_\star d\widehat{A}_\star-\sum_{p\in(\Sigma_\star\cap\partial{M})^{\mathrm{null}}}\left(2\alpha^+_\star(p)-\pi\right)}$

\ \ \ $\displaystyle{-\left(\int_{\partial{M}\cap M^+}\kappa_gds-\int_{\partial{M}\cap M^-}\kappa_gds\right)+2\pi\left(\#S^+-\#S^-\right)}$

\ \ \ $\displaystyle{-\pi\left(\#(\Sigma\cap\partial{M})^{\mathrm{null}}+2\#(\Sigma\cap\partial{M})^-\right)}$,
\item[$(3)$]$\displaystyle{\int_{\partial{M}\cap M^+}\kappa_gds-\int_{\partial{M}\cap M^-}\kappa_gds-2\pi\left(\chi(M^+)-\chi(M^-)\right)}$

\ \ \ $\displaystyle{-2\pi\left(\#S^+-\#S^-\right)-\pi\left(\#(\Sigma\cap\partial{M})^+-\#(\Sigma\cap\partial{M})^-\right)}$

$\displaystyle{=\int_{\partial{M}\cap M^+_\star}\kappa_g^\star ds_\star-\int_{\partial{M}\cap M^-_\star}\kappa_g^\star ds_\star-2\pi\left(\chi(M^+_\star)-\chi(M^-_\star)\right)}$

\ \ \ $\displaystyle{-2\pi\left(\#S^+_\star-\#S^-_\star\right)-\pi\left(\#(\Sigma_\star\cap\partial{M})^+-\#(\Sigma_\star\cap\partial{M})^-\right)}$,
\item[$(4)$]$\displaystyle{2\int_{\Sigma}\kappa_sds+\int_{\partial{M}}\kappa_gds-2\int_{M^-}Kd\widehat{A}-\sum_{p\in(\Sigma\cap\partial{M})^{\mathrm{null}}}\left(2\alpha^+(p)-\pi\right)}$

$\displaystyle{=2\int_{\Sigma_\star}\kappa_s^\star ds_\star+\int_{\partial{M}}\kappa_g^\star ds_\star-2\int_{M^-_\star}K_\star d\widehat{A}_\star-\sum_{p\in(\Sigma_\star\cap\partial{M})^{\mathrm{null}}}\left(2\alpha^+_\star(p)-\pi\right)}$,
\end{itemize}
where the notations in the formulas $(1)$-$(4)$ are the same as those in Theorem $\ref{thm3.5}$.}
\end{theorem}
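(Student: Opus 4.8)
The plan is to obtain each of $(1)$--$(4)$ as an explicit linear combination of the four Gauss--Bonnet type formulas of Theorem~\ref{thm3.5}, i.e.\ of Proposition~\ref{prop2.16} applied separately to the two coherent tangent bundles $(M,\mathcal{E},\langle\cdot,\cdot\rangle,D,\varphi)$ and $(M,\mathcal{E},\langle\cdot,\cdot\rangle,D,\psi)$ underlying the frontal bundle, and then to tie the two families together by the compatibility relation recorded in Lemma~\ref{lem3.4}. The one crucial input is Lemma~\ref{lem3.4}$(1)$, $Kd\widehat{A}=K_\star d\widehat{A}_\star$, which on integration gives $\int_M Kd\widehat{A}=\int_M K_\star d\widehat{A}_\star$; this is precisely the bridge that lets a $\varphi$-identity be equated with a $\psi$-identity.

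Before combining anything I would record two elementary reductions. Since $dA=d\widehat{A}$ on $M^+$ and $dA=-d\widehat{A}$ on $M^-$ (and likewise with subscript $\star$), we have $\int_M KdA=\int_M Kd\widehat{A}-2\int_{M^-}Kd\widehat{A}$, so formula~$(1)$ of Theorem~\ref{thm3.5} takes the ``signed-area'' form
\[
2\int_{\Sigma}\kappa_sds+\int_{\partial M}\kappa_gds-2\int_{M^-}Kd\widehat{A}-\sum_{p\in(\Sigma\cap\partial M)^{\mathrm{null}}}\bigl(2\alpha^+(p)-\pi\bigr)=2\pi\chi(M)-\int_M Kd\widehat{A},
\]
and formula~$(3)$ becomes the same statement with every symbol starred. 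Also, the finitely many boundary singular points have measure zero, so $\int_{\partial M}\kappa_gds=\int_{\partial M\cap M^+}\kappa_gds+\int_{\partial M\cap M^-}\kappa_gds$, and similarly for $\kappa_g^\star ds_\star$.

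Granting these, formulas~$(4)$ and~$(3)$ are immediate. For $(4)$: by the signed-area form of Theorem~\ref{thm3.5}$(1)$ its left-hand side equals $2\pi\chi(M)-\int_M Kd\widehat{A}$, by that of Theorem~\ref{thm3.5}$(3)$ its right-hand side equals $2\pi\chi(M)-\int_M K_\star d\widehat{A}_\star$, and the two agree by Lemma~\ref{lem3.4}$(1)$. For $(3)$: transposing the Euler-characteristic and singular-point terms in formula~$(2)$ of Theorem~\ref{thm3.5} shows its left-hand side equals $-\int_M Kd\widehat{A}$, the same manipulation on formula~$(4)$ of Theorem~\ref{thm3.5} gives $-\int_M K_\star d\widehat{A}_\star$, and Lemma~\ref{lem3.4}$(1)$ finishes it. Thus $(3)$ and $(4)$ cost only bookkeeping and Lemma~\ref{lem3.4}$(1)$.

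Formulas~$(1)$ and~$(2)$ are the substantive ones, and they are interchanged by $\varphi\leftrightarrow\psi$, so it is enough to prove~$(1)$. Starting from the value $2\pi\chi(M)-\int_M K_\star d\widehat{A}_\star$ of the signed-area form of Theorem~\ref{thm3.5}$(1)$ (using Lemma~\ref{lem3.4}$(1)$ to replace $K$ by $K_\star$), substitute for $\int_M K_\star d\widehat{A}_\star$ the expression furnished by formula~$(4)$ of Theorem~\ref{thm3.5}. After the $\kappa_g^\star$-integrals over $\partial M\cap M^\pm_\star$ and the terms $2\pi(\#S^+_\star-\#S^-_\star)$ cancel against the matching terms already present in~$(1)$, what remains to be established is a purely topological identity relating $\chi(M)$, $\chi(M^+_\star)$, $\chi(M^-_\star)$ and the counts $\#(\Sigma_\star\cap\partial M)^+$, $\#(\Sigma_\star\cap\partial M)^{\mathrm{null}}$, $\#(\Sigma_\star\cap\partial M)^-$ — morally an identity of the shape $2\chi(M^-_\star)=2\chi(M)-2\chi(M^+_\star)-\#(\Sigma_\star\cap\partial M)$, with the exact form fixed by whatever convention for $\chi(M^\pm_\varphi)$ is inherited from Proposition~\ref{prop2.16}. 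Verifying this identity is the main obstacle. I would do it by triangulating $M$ so that $\partial M$, $\Sigma_\star$, the interior singular points of the second kind and the boundary singular points are all subcomplexes (vertices), then computing $\chi(\overline{M^+_\star})$, $\chi(\overline{M^-_\star})$ and $\chi(\Sigma_\star)$ from the triangulation while tracking which cells are shared: along $\Sigma_\star$, and in particular at an interior second-kind point or a boundary point, the sharing pattern is dictated by which side ($M^+_\star$ or $M^-_\star$) the adjacent region lies on, i.e.\ by the angle dichotomies of Propositions~\ref{prop2.12} and~\ref{prop2.14} together with Definitions~\ref{def2.13} and~\ref{def2.15}. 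Equivalently one may phrase it as a cut-along-$\Sigma_\star$ computation, $\chi(M^+_\star)+\chi(M^-_\star)=\chi(M)+\chi(\Sigma_\star)+(\text{corner corrections at boundary singular points})$; pinning down those corner corrections, and making them consistent with Proposition~\ref{prop2.16}, is exactly where the care is needed. Once the identity is in hand, formula~$(1)$, and hence~$(2)$, follows by the same cancellation as indicated above.
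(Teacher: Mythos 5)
Your proposal is correct and follows essentially the same route as the paper: formulas $(3)$ and $(4)$ are obtained by pairing the Gauss--Bonnet formulas of Theorem~\ref{thm3.5} through $\int_M Kd\widehat{A}=\int_M K_\star d\widehat{A}_\star$ (Lemma~\ref{lem3.4}$(1)$), and $(1)$--$(2)$ by combining Theorem~\ref{thm3.5}$(1)$ with Theorem~\ref{thm3.5}$(4)$ plus a topological identity. The identity you single out as the ``main obstacle'' is precisely the paper's $(\ref{4.1})$, namely $\chi(M)=\chi(M^+_\star)+\chi(M^-_\star)+\tfrac{1}{2}\#(\Sigma_\star\cap\partial M)$, which the paper itself asserts in one line from non-degeneracy and transversality, so your triangulation/cut-along sketch merely fills in (and correctly identifies the convention-dependence of) a step the paper treats as immediate.
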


\begin{proof}

First, we show the formula $(1)$. Since the set of singular points $\Sigma_\star$ consists of non-degenerate singular points and is transversal to the boundary $\partial{M}$, we have
\begin{eqnarray}
\chi(M)=\chi(M_\star^+)+\chi(M_\star^-)+\chi(\Sigma_\star)=\chi(M_\star^+)+\chi(M_\star^-)+\frac{1}{2}\#(\Sigma_\star\cap\partial{M}).\label{4.1}
\end{eqnarray}
Hence, $\displaystyle{\int_MKdA}$ is computed as follows:
\begin{eqnarray}
\int_MKdA&=&\int_MKd\widehat{A}-2\int_{M^-}Kd\widehat{A}\nonumber\\
&=&\int_MK_\star d\widehat{A}_\star-2\int_{M^-}Kd\widehat{A}\ \ \ (\mbox{by $(1)$ of Lemma $\ref{lem3.4}$})\nonumber\\
&=&-2\int_{M^-}Kd\widehat{A}-\left(\int_{\partial{M}\cap M^+_\star}\kappa_g^\star ds_\star-\int_{\partial{M}\cap M^-_\star}\kappa_g^\star ds_\star\right)\nonumber\\
&&\ \ \ +2\pi\left(\chi(M^+_\star)-\chi(M^-_\star)\right)+2\pi\left(\#S^+_\star-\#S^-_\star\right)\nonumber\\
&&\ \ \ +\pi\left(\#(\Sigma_\star\cap\partial{M})^+-\#(\Sigma_\star\cap\partial{M})^-\right)\ \ \ \left(\mbox{by $(4)$ of Theorem $\ref{thm3.5}$}\right)\nonumber\\
&=&-2\int_{M^-}Kd\widehat{A}-\left(\int_{\partial{M}\cap M^+_\star}\kappa_g^\star ds_\star-\int_{\partial{M}\cap M^-_\star}\kappa_g^\star ds_\star\right)\nonumber\\
&&\ \ \ +2\pi\left(\chi(M)-2\chi(M^-_\star)-\frac{1}{2}\#(\Sigma_\star\cap\partial{M})\right)+2\pi\left(\#S^+_\star-\#S^-_\star\right)\nonumber\\
&&\ \ \ +\pi\left(\#(\Sigma_\star\cap\partial{M})^+-\#(\Sigma_\star\cap\partial{M})^-\right)\ \ \ (\mbox{by $(\ref{4.1})$})\nonumber\\
&=&-2\int_{M^-}Kd\widehat{A}-\left(\int_{\partial{M}\cap M^+_\star}\kappa_g^\star ds_\star-\int_{\partial{M}\cap M^-_\star}\kappa_g^\star ds_\star\right)\nonumber\\
&&\ \ \ +2\pi\chi(M)-4\pi\chi(M^-_\star)+2\pi\left(\#S^+_\star-\#S^-_\star\right)\nonumber\\
&&\ \ \ -\pi\left(\#(\Sigma_\star\cap\partial{M})^{\mathrm{null}}+2\#(\Sigma_\star\cap\partial{M})^-\right).\label{4.2}
\end{eqnarray}
Thus, by $(1)$ of Theorem $\ref{thm3.5}$ and $(\ref{4.2})$, we have
\begin{eqnarray*}
2\pi\chi(M)&=&2\int_{\Sigma}\kappa_sds+\int_{\partial{M}}\kappa_gds-\sum_{p\in(\Sigma\cap\partial{M})^{\mathrm{null}}}\left(2\alpha^+(p)-\pi\right)\nonumber\\
&&\ \ \ -2\int_{M^-}Kd\widehat{A}-\left(\int_{\partial{M}\cap M^+_\star}\kappa_g^\star ds_\star-\int_{\partial{M}\cap M^-_\star}\kappa_g^\star ds_\star\right)\nonumber\\
&&\ \ \ +2\pi\chi(M)-4\pi\chi(M^-_\star)+2\pi\left(\#S^+_\star-\#S^-_\star\right)\nonumber\\
&&\ \ \ -\pi\left(\#(\Sigma_\star\cap\partial{M})^{\mathrm{null}}+2\#(\Sigma_\star\cap\partial{M})^-\right).\label{4.3}
\end{eqnarray*}
Thus, by eliminating $2\pi\chi(M)$ on both sides, we obtain the formula $(1)$.

The formula $(2)$ is obtained by exchanging the roles of $\varphi$ and $\psi$ in $(1)$.

We show the formula $(3)$. By $(2)$ and $(4)$ of Theorem $\ref{thm3.5}$, we have
\begin{eqnarray}
\int_MKd\widehat{A}&=&-\left(\int_{\partial{M}\cap M^+}\kappa_gds-\int_{\partial{M}\cap M^-}\kappa_gds\right)\nonumber\\
&&\ \ \ +2\pi\left(\chi(M^+)-\chi(M^-)\right)+2\pi\left(\#S^+-\#S^-\right)\nonumber\\
&&\ \ \ +\pi\left(\#(\Sigma\cap\partial{M})^+-\#(\Sigma\cap\partial{M})^-\right),\label{4.4}\\
\int_MK_\star d\widehat{A}_\star&=&-\left(\int_{\partial{M}\cap M^+_\star}\kappa_g^\star ds_\star-\int_{\partial{M}\cap M^-_\star}\kappa_g^\star ds_\star\right)\nonumber\\
&&\ \ \ +2\pi\left(\chi(M^+_\star)-\chi(M^-_\star)\right)+2\pi\left(\#S^+_\star-\#S^-_\star\right)\nonumber\\
&&\ \ \ +\pi\left(\#(\Sigma_\star\cap\partial{M})^+-\#(\Sigma_\star\cap\partial{M})^-\right).\label{4.5}
\end{eqnarray}
By $(1)$ of Lemma $\ref{lem3.4}$, $(\ref{4.4})$ and $(\ref{4.5})$ are equal, so $(3)$ is obtained.

Finally, we show the formula $(4)$. By $(1)$ and $(3)$ of Theorem $\ref{thm3.5}$, $\int_MKdA=\int_MKd\widehat{A}-2\int_{M^-}Kd\widehat{A}$ and $\int_MK_\star dA_\star=\int_MK_\star d\widehat{A}_\star-2\int_{M^-_\star}K_\star d\widehat{A}_\star$, we have
\begin{eqnarray}
\int_MKd\widehat{A}&=&-2\int_{\Sigma}\kappa_sds-\int_{\partial{M}}\kappa_gds+2\int_{M^-}Kd\widehat{A}\nonumber\\
&&\ \ \ +2\pi\chi(M)+\sum_{p\in(\Sigma\cap\partial{M})^{\mathrm{null}}}\left(2\alpha^+(p)-\pi\right),\label{4.6}\\
\int_MK_\star d\widehat{A}_\star&=&-2\int_{\Sigma_\star}\kappa_s^\star ds_\star-\int_{\partial{M}}\kappa_g^\star ds_\star+2\int_{M^-_\star}K_\star d\widehat{A}_\star\nonumber\\
&&\ \ \ +2\pi\chi(M)+\sum_{p\in(\Sigma_\star\cap\partial{M})^{\mathrm{null}}}\left(2\alpha^+_\star(p)-\pi\right).\label{4.7}
\end{eqnarray}
By $(1)$ of Lemma $\ref{lem3.4}$, $(\ref{4.6})$ and $(\ref{4.7})$ are equal, so $(4)$ is obtained.\qed
\end{proof}

If either the first homomorphism $\varphi$ or the third homomorphism $\psi$ has no singular points, the formulas in Theorem $\ref{thm4.1}$ are reduced to the following formulas.

\begin{corollary}
\label{cor4.2}

{\it Under the assumption of Theorem $\ref{thm4.1}$, if $\varphi$ has no singular points, then we have the following two formulas:
\begin{itemize}
\item[$(1)$]$\displaystyle{2\chi(M^-_\star)=\frac{1}{2\pi}\int_{\partial{M}}\kappa_gds-\frac{1}{2\pi}\left(\int_{\partial{M}\cap M^+_\star}\kappa_g^\star ds_\star-\int_{\partial{M}\cap M^-_\star}\kappa_g^\star ds_\star\right)}$

\ \ \ $\displaystyle{+\#S^+_\star-\#S^-_\star-\left(\frac{\#(\Sigma_\star\cap\partial{M})^{\mathrm{null}}}{2}+\#(\Sigma_\star\cap\partial{M})^-\right)}$,
\item[$(2)$]$\displaystyle{\int_{\partial{M}}\kappa_gds=2\int_{\Sigma_\star}\kappa_s^\star ds_\star+\int_{\partial{M}}\kappa_g^\star ds_\star-2\int_{M^-_\star}K_\star d\widehat{A}_\star-\sum_{p\in(\Sigma_\star\cap\partial{M})^{\mathrm{null}}}\left(2\alpha^+_\star(p)-\pi\right)}$.
\end{itemize}
On the other hand, if $\psi$ has no singular points, then we have the following two formulas:
\begin{itemize}
\item[$(3)$]$\displaystyle{2\chi(M^-)=\frac{1}{2\pi}\int_{\partial{M}}\kappa_g^\star ds_\star-\frac{1}{2\pi}\left(\int_{\partial{M}\cap M^+}\kappa_gds-\int_{\partial{M}\cap M^-}\kappa_gds\right)}$

\ \ \ $\displaystyle{+\#S^+-\#S^--\left(\frac{\#(\Sigma\cap\partial{M})^{\mathrm{null}}}{2}+\#(\Sigma\cap\partial{M})^-\right)}$,
\item[$(4)$]$\displaystyle{\int_{\partial{M}}\kappa_g^\star ds_\star=2\int_{\Sigma}\kappa_sds+\int_{\partial{M}}\kappa_gds-2\int_{M^-}Kd\widehat{A}-\sum_{p\in(\Sigma\cap\partial{M})^{\mathrm{null}}}\left(2\alpha^+(p)-\pi\right)}$.
\end{itemize}}
\end{corollary}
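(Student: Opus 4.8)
The plan is to specialize the four formulas of Theorem~\ref{thm4.1} to the case where one of the two homomorphisms is everywhere regular, and to record the simplifications that occur. I will treat the case $\Sigma=\Sigma_\varphi=\emptyset$ first; the case $\Sigma_\star=\emptyset$ follows by exchanging the roles of $\varphi$ and $\psi$ exactly as in the proof of Theorem~\ref{thm4.1}.

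First I would observe the consequences of $\Sigma=\emptyset$. Since $\varphi$ has no singular points, $M^+$ and $M^-$ partition $M$ up to a set of measure zero, but more is true: with $\varphi$ regular everywhere, $d\widehat A=dA$ on all of $M$ (after possibly reversing the global orientation of $\mathcal{E}$, which is harmless here since $M$ is connected), so $M=M^+$ and $M^-=\emptyset$. Consequently $\chi(M^+)=\chi(M)$, $\chi(M^-)=0$, $\int_{M^-}Kd\widehat A=0$, $\int_{\partial M\cap M^-}\kappa_g\,ds=0$, $\int_{\partial M\cap M^+}\kappa_g\,ds=\int_{\partial M}\kappa_g\,ds$, and $\Sigma\cap\partial M=\emptyset$ so that $\#S^\pm=0$, all the boundary-singular-point counts $\#(\Sigma\cap\partial M)^\pm$ and $\#(\Sigma\cap\partial M)^{\mathrm{null}}$ vanish, and the sum $\sum_{p\in(\Sigma\cap\partial M)^{\mathrm{null}}}(2\alpha^+(p)-\pi)$ is empty hence zero.

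Next I would substitute these vanishings into formulas~(1) and~(4) of Theorem~\ref{thm4.1}. In formula~(1), the terms $2\int_\Sigma\kappa_s\,ds$, $\int_{M^-}Kd\widehat A$, and $\sum_{p\in(\Sigma\cap\partial M)^{\mathrm{null}}}(2\alpha^+(p)-\pi)$ all drop out, leaving
$$4\pi\chi(M^-_\star)=\int_{\partial M}\kappa_g\,ds-\left(\int_{\partial M\cap M^+_\star}\kappa_g^\star ds_\star-\int_{\partial M\cap M^-_\star}\kappa_g^\star ds_\star\right)+2\pi\left(\#S^+_\star-\#S^-_\star\right)-\pi\left(\#(\Sigma_\star\cap\partial M)^{\mathrm{null}}+2\#(\Sigma_\star\cap\partial M)^-\right),$$
and dividing by $2\pi$ gives assertion~(1) of the corollary. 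In formula~(4) of Theorem~\ref{thm4.1}, the left-hand side collapses (again $\Sigma=\emptyset$, $M^-=\emptyset$, $(\Sigma\cap\partial M)^{\mathrm{null}}=\emptyset$) to $\int_{\partial M}\kappa_g\,ds$, which immediately yields assertion~(2) of the corollary. Finally, applying the same argument with $\varphi$ and $\psi$ interchanged — so now $\Sigma_\star=\emptyset$, $M_\star^-=\emptyset$, $\chi(M^+_\star)=\chi(M)$, etc. — to formulas~(2) and~(4) of Theorem~\ref{thm4.1} produces assertions~(3) and~(4).

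There is no serious obstacle here; the only point requiring a little care is the justification that $\Sigma=\emptyset$ forces $M^-=\emptyset$ rather than merely $M^-$ being a union of components, and that the orientation of $\mathcal{E}$ can be chosen so that $d\widehat A=dA$ globally — this uses connectedness of $M$ together with the fact that $\lambda_\varphi$ is a nowhere-vanishing continuous function, hence of constant sign. Everything else is bookkeeping: tracking which terms in Theorem~\ref{thm4.1} vanish and reading off the reduced identities.
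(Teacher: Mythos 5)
Your overall route---specializing the four formulas of Theorem~\ref{thm4.1} to the case $\Sigma=\emptyset$ (resp.\ $\Sigma_\star=\emptyset$) and discarding the terms that vanish---is exactly how the paper obtains Corollary~\ref{cor4.2}; the corollary is stated there as an immediate reduction of Theorem~\ref{thm4.1}, and your bookkeeping (dropping the $\kappa_s$-integrals, the boundary singular-point counts, the angle sums, and deriving $(1)$, $(2)$ from Theorem~\ref{thm4.1}\,$(1)$,\,$(4)$ and $(3)$, $(4)$ from Theorem~\ref{thm4.1}\,$(2)$,\,$(4)$ by exchanging $\varphi$ and $\psi$) is correct.

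The one step that does not work as written is your justification that $M^-=\emptyset$. You propose to reverse the orientation of $\mathcal{E}$ so that $d\widehat{A}=dA$ and call this harmless; it is not, because $\varphi$ and $\psi$ take values in the \emph{same} bundle $\mathcal{E}$. Reversing its orientation also reverses $d\widehat{A}_\star$, hence swaps $M^+_\star$ with $M^-_\star$, $\#S^+_\star$ with $\#S^-_\star$, $(\Sigma_\star\cap\partial M)^+$ with $(\Sigma_\star\cap\partial M)^-$, and replaces $\alpha^+_\star$ by $\alpha^-_\star$ (while $\kappa_s^\star ds_\star$ and $\kappa_g^\star ds_\star$ are unchanged, by Proposition~\ref{prop2.10} and the sign conventions in their definitions). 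So after the reversal you would prove formula $(1)$ with all starred labels interchanged, not formula $(1)$ as stated. Moreover, $M$ is not assumed connected, so at best $\lambda$ has constant sign on each component. What the reduction actually uses---tacitly in the paper as well, in line with the formulas of \cite{4} it generalizes---is the orientation convention natural when $\varphi$ has no singular points, namely that $d\widehat{A}=dA$, i.e.\ $M=M^+$ and $M^-=\emptyset$; if instead $\lambda<0$ on some component, the term $-2\int_{M^-}K\,d\widehat{A}$ in Theorem~\ref{thm4.1} does not vanish and would have to be kept in $(1)$ and $(2)$ (and analogously for $(3)$, $(4)$). Apart from this point, which should be stated as a convention rather than ``fixed'' by reorienting $\mathcal{E}$, your argument coincides with the paper's.
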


\begin{example}
\label{ex4.3}

{\rm Let $M$ be a compact oriented surface with boundary, $(N,g)$ a $3$-dimensional Riemannian manifold, and $f:M\to N$ a co-orientable frontal. Then, we can construct the frontal bundle $(M,\mathcal{E}_f,\langle\cdot,\cdot\rangle,D,\varphi_f,\psi_f)$ over $M$ as in Example $\ref{ex3.2}$. Therefore, the formulas in Corollary $\ref{cor4.2}$ hold. We note that $(1)$ (resp. $(2)$, $(3)$, $(4)$) of Corollary $\ref{cor4.2}$ is a generalization of the formula in \cite[Theorem $3.12$]{4} (resp. \cite[Theorem $3.18$]{4}, \cite[Theorem $3.16$]{4}, \cite[Theorem $3.23$]{4}) to surfaces with boundary.}
\end{example}

\section{Formulas for frontals with the bounded extrinsic curvature}\label{sec5}

Let $M$ be an oriented surface with boundary, $(N,g)$ a $3$-dimensional Riemannian manifold, and $f:M\to N$ a co-orientable frontal. Then, we obtain the frontal bundle $(M,\mathcal{E}_f,\langle\cdot,\cdot\rangle,D,\varphi:=\varphi_f,\psi:=\psi_f)$ over $M$ as in Example $\ref{ex3.2}$. Then, the following lemma holds.

\begin{lemma}
\label{lem5.1}

{\it Let $p\in M$ be a singular point of the first kind of $\varphi$. We suppose that there exists a neighborhood $U$ of $p$ such that $\log|K^{\mathrm{ext}}|$ is bounded on $U\backslash\Sigma$, where $K^{\mathrm{ext}}$ is the extrinsic curvature with respect to $\varphi$. Then, the following holds on $U$:
\begin{itemize}
\item[$(1)$]$\Sigma=\Sigma_\star$,
\item[$(2)$]If $p$ is also a singular point of the first kind of $\psi$, then we have
\begin{equation*}
M^+=M^{\mathrm{sgn}(K^{\mathrm{ext}}|_U)}_\star,\ \kappa_sds=\mathrm{sgn}(K^{\mathrm{ext}}|_U)\kappa_s^\star ds_\star,\label{5.1}
\end{equation*}
\item[$(3)$]If the second fundamental form $\mathrm{II}$ is equal to a constant multiple of the square root of the product of the first and third fundamental forms $\sqrt{\mathrm{I}\cdot\mathrm{III}}$ on $\partial{M}\backslash\Sigma$, then we have
\begin{equation*}
\kappa_gds=\mathrm{sgn}(K^{\mathrm{ext}}|_U)\kappa_g^\star ds_\star\label{5.2}
\end{equation*}
on $\partial{M}\backslash\Sigma$.
\end{itemize}}
\end{lemma}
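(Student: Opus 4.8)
The plan is to localize near $p$ in a coordinate chart $(U;u,v)$ and to use the coordinate form of Lemma~\ref{lem3.4}$(2)$: writing $d\widehat{A}=\lambda\,du\wedge dv$ and $d\widehat{A}_\star=\lambda_\star\,du\wedge dv$, the relation $K^{\mathrm{ext}}d\widehat{A}=d\widehat{A}_\star$ reads $\lambda_\star=K^{\mathrm{ext}}\lambda$ on $U\setminus\Sigma$. Boundedness of $\log|K^{\mathrm{ext}}|$ on $U\setminus\Sigma$ means precisely that there are constants $c,C>0$ with $c\le|K^{\mathrm{ext}}|\le C$, hence $c|\lambda|\le|\lambda_\star|\le C|\lambda|$, on $U\setminus\Sigma$. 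For $(1)$ I shrink $U$ so that, $p$ being a singular point of the first kind of $\varphi$, the set $\Sigma\cap U$ is a regular curve of first-kind singular points; then every point of $\Sigma\cap U$ lies in the closure of $U\setminus\Sigma$, so by continuity of $\lambda_\star$ the bound $|\lambda_\star|\le C|\lambda|$ forces $\lambda_\star\equiv0$ on $\Sigma\cap U$, while $c|\lambda|\le|\lambda_\star|$ forces $\lambda_\star\ne0$ off $\Sigma$. Thus $\{\lambda_\star=0\}=\{\lambda=0\}$ on $U$, i.e. $\Sigma=\Sigma_\star$.

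For $(2)$ I first choose coordinates on $U$ with $\Sigma\cap U=\{v=0\}$ and write $\lambda=v\,a$, $\lambda_\star=v\,b$ with $a,b$ smooth; non-degeneracy at $p$ gives $a(\cdot,0)\ne0$, and $c\le|b/a|\le C$ forces $b(\cdot,0)\ne0$, so $K^{\mathrm{ext}}=b/a$ extends to a continuous, nowhere-zero function on the connected set $U$. Hence $\varepsilon:=\mathrm{sgn}(K^{\mathrm{ext}}|_U)\in\{\pm1\}$ is well defined, and from $\lambda_\star=K^{\mathrm{ext}}\lambda$ we get $\mathrm{sgn}(\lambda_\star)=\varepsilon\,\mathrm{sgn}(\lambda)$ on $U\setminus\Sigma$, i.e. $M^+=M^\varepsilon_\star$ on $U$. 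For the singular-curvature identity, let $\gamma$ be the common singular curve, let $\eta$ (resp. $\eta_\star$) be a null vector field of $\varphi$ (resp. $\psi$) along $\gamma$, and let $\theta_\varphi,\theta_\psi$ be the angular coordinates of $\varphi(\gamma')$ and $\psi(\gamma')$ in a fixed positive orthonormal frame of $\mathcal{E}$ along $\gamma$ with connection form $\mu$; unwinding Definition~\ref{def2.9} gives $\kappa_s\,ds=\mathrm{sgn}\bigl(d\lambda(\eta)\bigr)\bigl(\theta_\varphi'-\mu(\gamma')\bigr)dt$ and $\kappa_s^\star\,ds_\star=\mathrm{sgn}\bigl(d\lambda_\star(\eta_\star)\bigr)\bigl(\theta_\psi'-\mu(\gamma')\bigr)dt$. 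Differentiating $\lambda_\star=K^{\mathrm{ext}}\lambda$ along $\eta_\star$ at a point of $\gamma$ (where $\lambda=0$) gives $d\lambda_\star(\eta_\star)=K^{\mathrm{ext}}\,d\lambda(\eta_\star)$ there, and since $\eta$ and $\eta_\star$ lie on the same side of $\gamma'$ (each forms a positive frame with $\gamma'$) we have $\mathrm{sgn}(d\lambda(\eta))=\mathrm{sgn}(d\lambda(\eta_\star))$; these two facts reduce the claimed identity $\kappa_s\,ds=\varepsilon\,\kappa_s^\star\,ds_\star$ to $\theta_\varphi'=\theta_\psi'$, that is, to constancy along $\gamma$ of the angle between $\varphi(\gamma')$ and $\psi(\gamma')$. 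Applying the compatibility condition with $X=\gamma'$ and $Y=\eta$ gives $\langle\varphi(\gamma'),\psi(\eta)\rangle=0$; together with $\psi$ having rank $1$ at $\gamma(t)$ this forces either $\varphi(\gamma')\perp\psi(\gamma')$ (constant angle $\pi/2$) or $\psi(\eta)=0$ (the null directions of $\varphi$ and $\psi$ coincide along $\gamma$), and in the second case one pins the angle down using the leading-order expansions $\lambda=v\,a$, $\lambda_\star=v\,b$ and $K^{\mathrm{ext}}=b/a$.

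For $(3)$ the hypothesis, evaluated on a tangent vector $T$ of $\partial M$, reads $\mathrm{II}(T,T)=c_0\sqrt{\mathrm{I}(T,T)\,\mathrm{III}(T,T)}$ for a constant $c_0$; since $\mathrm{II}(T,T)=-\langle\varphi(T),\psi(T)\rangle$, $\mathrm{I}(T,T)=|\varphi(T)|^2$ and $\mathrm{III}(T,T)=|\psi(T)|^2$, this says $\langle\varphi(T),\psi(T)\rangle/(|\varphi(T)|\,|\psi(T)|)$ equals the constant $-c_0$ along $\partial M\setminus\Sigma$, i.e. the angle between $\varphi(T)$ and $\psi(T)$ is constant there. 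Running the turning-rate comparison of $(2)$ — now for the boundary curve in place of $\gamma$ and for the geodesic-curvature measures $\kappa_g\,ds$, $\kappa_g^\star\,ds_\star$ (which carry no $\mathrm{sgn}(d\lambda)$ factors) — the two measures differ only through $\theta_\varphi'$ versus $\theta_\psi'$, which agree by constancy of the angle, and through the orientation of the unit normals used to measure geodesic curvature, which flip according to $\mathrm{sgn}(K^{\mathrm{ext}})$ via the identification $M^+=M^\varepsilon_\star$ of $(2)$; this is where the factor $\mathrm{sgn}(K^{\mathrm{ext}})$ in $(3)$ comes from. The main obstacle is the last step of $(2)$ — ruling out, or directly handling, the subcase in which the null directions of $\varphi$ and $\psi$ coincide along $\gamma$, so as to conclude the angle between $\varphi(\gamma')$ and $\psi(\gamma')$ is constant — together with the orientation/sign bookkeeping responsible for the factors $\mathrm{sgn}(K^{\mathrm{ext}})$ in $(2)$ and $(3)$.
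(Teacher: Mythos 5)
Two remarks, one on what matches and one on a genuine gap.

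For part $(3)$ — which is the only part the paper actually proves, since it cites \cite[Lemma $3.25$]{4} for $(1)$ and $(2)$ — your argument is essentially the paper's: parametrize $\partial M\setminus\Sigma$, introduce the angle functions $\theta,\theta_\star$ of $\varphi(\gamma'),\psi(\gamma')$ in a positive orthonormal frame with connection form $\mu$, obtain $\kappa_g\,ds=\mathrm{sgn}(\lambda)\left(\theta'-\mu(\gamma')\right)dt$ and its $\star$-analogue, deduce that $\theta-\theta_\star$ is constant from $\mathrm{II}(\gamma',\gamma')=-|\varphi(\gamma')|\,|\psi(\gamma')|\cos(\theta-\theta_\star)$ together with the hypothesis $\mathrm{II}=c\sqrt{\mathrm{I}\cdot\mathrm{III}}$ on the boundary, and convert the signs via $\lambda_\star=K^{\mathrm{ext}}\lambda$ (Lemma \ref{lem3.4}$(2)$). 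Your phrasing that the factor $\mathrm{sgn}(K^{\mathrm{ext}})$ comes from ``the orientation of the unit normals'' is exactly the $\mathrm{sgn}(\lambda)$ versus $\mathrm{sgn}(\lambda_\star)$ factor in the definition of the geodesic curvature, so this is fine. Your proof of $(1)$ and of the first claim of $(2)$ (writing $\lambda=v\,a$, $\lambda_\star=v\,b$ and forcing $b\neq0$ on the singular curve) is also correct; the paper does not reprove these, so here you go beyond it.

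The genuine gap is exactly where you flag it: the identity $\kappa_s\,ds=\mathrm{sgn}(K^{\mathrm{ext}}|_U)\,\kappa_s^\star\,ds_\star$ in $(2)$ in the subcase $\ker\varphi=\ker\psi$ along the singular curve. Your reduction needs $\theta'=\theta_\star'$ along $\gamma$, and compatibility only gives this when $\psi(\eta)\neq0$ (angle $\equiv\pm\pi/2$); in the coincident-kernel case, ``pinning the angle down by leading-order expansions'' cannot succeed from the data you use, because the identity is not a formal consequence of the frontal-bundle axioms plus boundedness of $\log|K^{\mathrm{ext}}|$. For instance, take $\mathcal{E}$ trivial and flat over coordinates $(u,v)$, $\varphi(\partial_u)=e_1$, $\varphi(\partial_v)=v\,e_2$, $\psi(\partial_u)=\alpha(u)e_1+\beta e_2$, $\psi(\partial_v)=v\beta\,e_1$ with $\beta\neq0$ constant and $\alpha$ nonconstant: coherence and the compatibility condition hold, both $\varphi$ and $\psi$ have only first-kind singularities along $\{v=0\}$ with common null direction $\partial_v$, and $K^{\mathrm{ext}}\equiv-\beta^2$, yet $\kappa_s\,ds\equiv0$ while $\kappa_s^\star\,ds_\star=\alpha'\beta\,(\alpha^2+\beta^2)^{-1}du\not\equiv0$. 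This is not a counterexample to Lemma \ref{lem5.1} itself, which concerns the frontal bundle $(\varphi_f,\psi_f)$ of an actual frontal in a Riemannian $3$-manifold, but it shows that closing your remaining subcase requires input beyond compatibility, rank counting and the expansions $\lambda=v\,a$, $\lambda_\star=v\,b$ — namely the geometric realization (Gauss--Codazzi-type relations) or the cited result \cite[Lemma $3.25$]{4}, which is precisely what the paper invokes for $(1)$ and $(2)$. So relative to the paper's own proof your proposal is fine on $(3)$, but as a self-contained proof of the full statement the second half of $(2)$ remains open.
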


$(1)$ and $(2)$ of Lemma $\ref{lem5.1}$ are shown in \cite[Lemma $3.25$]{4}. We show $(3)$.

\begin{proof}[\textit{Proof of $(3)$}.]

We take a positive orthonormal frame field $\left\{\bm{e}_1,\bm{e}_2\right\}$ of $\mathcal{E}|_U$ and define matrices $G$ and $G_\star$ as
\begin{equation*}
\left(\varphi_u,\varphi_v\right)=\left(\bm{e}_1,\bm{e}_2\right)G,\ \left(\psi_u,\psi_v\right)=\left(\bm{e}_1,\bm{e}_2\right)G_\star.\label{5.3}
\end{equation*}
Let $\mu$ be a connection form of the connection $D$ with respect to $\left\{\bm{e}_1,\bm{e}_2\right\}$.

We parameterize $\partial{M}\backslash\Sigma\left(=\partial{M}\backslash\Sigma_\star\right)$ by a regular curve $\gamma(t)$. Then, $\varphi(\gamma^\prime(t))$ is non-zero since $\gamma(t)$ is a regular point. So, we define a $C^\infty$-function $\theta(t)$ as
\begin{equation}
\varphi(\gamma^\prime(t))=|\varphi(\gamma^\prime(t))|\left(\cos\theta(t)(\bm{e}_1)_{\gamma(t)}+\sin\theta(t)(\bm{e}_2)_{\gamma(t)}\right).\label{5.4}
\end{equation}
Hence, the unit co-normal vector field $\bm{n}(t)$ along $\gamma(t)$ is expressed as
\begin{equation*}
\bm{n}(t):=-\sin\theta(t)(\bm{e}_1)_{\gamma(t)}+\cos\theta(t)(\bm{e}_2)_{\gamma(t)}.\label{5.5}
\end{equation*}
Then, the covariant derivative $D_t\bm{n}(t)$ of $\bm{n}(t)$ with respect to $\gamma^\prime(t)$ is expressed as
\begin{equation}
D_t\bm{n}(t)=\left(\mu(\gamma^\prime(t))-\theta^\prime(t)\right)\frac{\varphi(\gamma^\prime(t))}{|\varphi(\gamma^\prime(t))|}.\label{5.6}
\end{equation}
By substituting $(\ref{5.6})$ into the definition of the geodesic curvature $\kappa_g$ with respect to $\varphi$, we obtain
\begin{eqnarray}
\kappa_g(t)&:=&\mathrm{sgn}(\lambda(\gamma(t)))\frac{\langle D_t\varphi(\gamma^\prime(t)),\bm{n}(t)\rangle}{|\varphi(\gamma^\prime(t))|^2}\nonumber\\
&=&-\mathrm{sgn}(\lambda(\gamma(t)))\frac{\langle \varphi(\gamma^\prime(t)),D_t\bm{n}(t)\rangle}{|\varphi(\gamma^\prime(t))|^2}\nonumber\\
&=&-\mathrm{sgn}(\lambda(\gamma(t)))\frac{\mu(\gamma^\prime(t))-\theta^\prime(t)}{|\varphi(\gamma^\prime(t))|}.\label{5.7}
\end{eqnarray}
Therefore, by multiplying both sides of $(\ref{5.7})$ by the arc-length measure $ds:=|\varphi(\gamma^\prime(t))|dt$, we obtain
\begin{equation}
\kappa_g(t)ds=-\mathrm{sgn}(\lambda(\gamma(t)))\left(\mu(\gamma^\prime(t))-\theta^\prime(t)\right)dt.\label{5.8}
\end{equation}

Similarly, defining a $C^\infty$-function $\theta_\star(t)$ by
\begin{equation}
\psi(\gamma^\prime(t))=|\psi(\gamma^\prime(t))|\left(\cos\theta_\star(t)(\bm{e}_1)_{\gamma(t)}+\sin\theta_\star(t)(\bm{e}_2)_{\gamma(t)}\right),\label{5.9}
\end{equation}
we have
\begin{eqnarray}
\kappa_g^\star(t)ds_\star&=&-\mathrm{sgn}(\lambda_\star(\gamma(t)))\left(\mu(\gamma^\prime(t))-\theta^\prime_\star(t)\right)dt\ \ \ \left(ds_\star:=|\psi(\gamma^\prime(t))|dt\right).\label{5.10}
\end{eqnarray}

By $(2)$ of Lemma $\ref{lem3.4}$, we have
\begin{equation}
\mathrm{sgn}(\lambda(\gamma(t)))=\mathrm{sgn}(K^{\mathrm{ext}}|_U)\mathrm{sgn}(\lambda_\star(\gamma(t))).\label{5.11}
\end{equation}

By $(\ref{5.4})$ and $(\ref{5.9})$, we have
\begin{eqnarray}
\mathrm{II}(\gamma^\prime(t),\gamma^\prime(t))=-|\varphi(\gamma^\prime(t))||\psi(\gamma^\prime(t))|\cos\left(\theta(t)-\theta_\star(t)\right).\label{5.12}
\end{eqnarray}
On the other hand, since $\mathrm{II}$ is a constant multiple of $\sqrt{\mathrm{I}\cdot\mathrm{III}}$, there exists a constant $c$ such that
\begin{equation}
\mathrm{II}(\gamma^\prime(t),\gamma^\prime(t))=c|\varphi(\gamma^\prime(t))||\psi(\gamma^\prime(t))|.\label{5.13}
\end{equation}
Hence, by $(\ref{5.12})$ and $(\ref{5.13})$, we see that $\theta(t)-\theta_\star(t)$ is a constant, so
\begin{equation}
\theta^\prime(t)=\theta^\prime_\star(t).\label{5.14}
\end{equation}
Thus, by summarizing $(\ref{5.8})$, $(\ref{5.10})$, $(\ref{5.11})$, and $(\ref{5.14})$, we obtain $(3)$.\qed
\end{proof}

\begin{theorem}
\label{thm5.2}

{\it Let $M$ be a compact oriented surface with boundary, $(N,g)$ a $3$-dimensional Riemannian manifold, $f:M\to N$ a co-orientable frontal and $\nu:M\to T_1N$ a unit normal vector field of $f$. We suppose that $f$ and $\nu$ allow only singular points of the first kind and admissible singular points of the second kind, and that the sets of singular points $\Sigma:=\Sigma_f$ and $\Sigma_\star:=\Sigma_\nu$ are transversal to the boundary $\partial M$. Furthermore, we suppose the following conditions:
\begin{itemize}
\item $\log|K^{\mathrm{ext}}|$ is bounded on $M\backslash\Sigma$ (we note that $\Sigma=\Sigma_\star$ by $(1)$ of Lemma $\ref{lem5.1}$),
\item Singular points of the first kind of $f$ are also singular points of the first kind of $\nu$,
\item The second fundamental form $\mathrm{II}$ is equal to a constant multiple of the square root of the product of the first and third fundamental forms $\sqrt{\mathrm{I}\cdot\mathrm{III}}$ on $\partial M\backslash\Sigma$.
\end{itemize}
Then, if $K^{\mathrm{ext}}>0$, then we have the following two formulas:
\begin{itemize}
\item[$(1)$]$\displaystyle{\#S^+-\#S^-+\frac{\#(\Sigma\cap\partial{M})^+-\#(\Sigma\cap\partial{M})^-}{2}}$

$\displaystyle{=\#S^+_\star-\#S^-_\star+\frac{\#(\Sigma_\star\cap\partial{M})^+-\#(\Sigma_\star\cap\partial{M})^-}{2}}$,
\item[$(2)$]$\displaystyle{\sum_{p\in(\Sigma\cap\partial{M})^{\mathrm{null}}}\left(2\alpha^+(p)-\pi\right)=\sum_{p\in(\Sigma_\star\cap\partial{M})^{\mathrm{null}}}\left(2\alpha^+_\star(p)-\pi\right)}$.
\end{itemize}
On the other hand, if $K^{\mathrm{ext}}<0$, then we have the following two formulas:
\begin{itemize}
\item[$(3)$]$\displaystyle{2\left(\chi(M^+)-\chi(M^-)\right)+\#S^+-\#S^-+\frac{\#(\Sigma\cap\partial{M})^+-\#(\Sigma\cap\partial{M})^-}{2}}$

$\displaystyle{=\#S^+_\star-\#S^-_\star+\frac{\#(\Sigma_\star\cap\partial{M})^+-\#(\Sigma_\star\cap\partial{M})^-}{2}}$,
\item[$(4)$]$\displaystyle{4\pi\chi(M)+\sum_{p\in(\Sigma\cap\partial{M})^{\mathrm{null}}}\left(2\alpha^+(p)-\pi\right)+\sum_{p\in(\Sigma_\star\cap\partial{M})^{\mathrm{null}}}\left(2\alpha^+_\star(p)-\pi\right)=0}$.
\end{itemize}
Here, the notations in $(1)$-$(4)$ are the same as those in Theorem $\ref{thm3.5}$.}
\end{theorem}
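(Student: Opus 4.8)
The plan is to feed the hypotheses of Theorem \ref{thm5.2} into Theorem \ref{thm4.1}, using Lemma \ref{lem5.1} to convert every $\psi$-quantity into the corresponding $\varphi$-quantity up to the sign $s:=\mathrm{sgn}(K^{\mathrm{ext}})$, and then separate the two cases $s=+1$ and $s=-1$. First I would record the consequences of Lemma \ref{lem5.1} under the three standing assumptions: $\Sigma=\Sigma_\star$, $M^{\pm}=M^{\pm s}_\star$ (so $M^-_\star=M^-$ when $s=+1$ and $M^-_\star=M^+$ when $s=-1$), $\kappa_s\,ds = s\,\kappa_s^\star\,ds_\star$ along $\Sigma$, and $\kappa_g\,ds = s\,\kappa_g^\star\,ds_\star$ along $\partial M\setminus\Sigma$. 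Also, from $\Sigma=\Sigma_\star$ we get $\Sigma\cap\partial M=\Sigma_\star\cap\partial M$ as sets; the signed-area sign function $\mathrm{sgn}(\lambda)=s\,\mathrm{sgn}(\lambda_\star)$ then identifies the $M^+$-side with the $M^{\pm s}_\star$-side, so when $s=+1$ the classifications of boundary and interior second-kind singular points for $\varphi$ and $\psi$ agree (positive$\leftrightarrow$positive, null$\leftrightarrow$null, negative$\leftrightarrow$negative), and $\alpha^+(p)=\alpha^+_\star(p)$ at null boundary points; when $s=-1$ the $+$ and $-$ labels swap, $S^{\pm}\leftrightarrow S^{\mp}_\star$, $(\Sigma\cap\partial M)^{\pm}\leftrightarrow(\Sigma_\star\cap\partial M)^{\mp}$, null stays null, and $\alpha^+(p)+\alpha^+_\star(p)=\pi$ at a null boundary point (this last from Proposition \ref{prop2.14}, since the $M^+$-side for $\varphi$ is the $M^-_\star$-side for $\psi$).

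Next, for the case $K^{\mathrm{ext}}>0$ I would take formula $(3)$ of Theorem \ref{thm4.1} and substitute the dictionary above with $s=+1$. Since every $\star$-quantity equals the corresponding non-$\star$-quantity, the boundary integrals cancel, $\chi(M^{\pm})=\chi(M^{\pm}_\star)$, and what remains is exactly
\[
-2\pi(\#S^+-\#S^-)-\pi\bigl(\#(\Sigma\cap\partial M)^+-\#(\Sigma\cap\partial M)^-\bigr)
=-2\pi(\#S^+_\star-\#S^-_\star)-\pi\bigl(\#(\Sigma_\star\cap\partial M)^+-\#(\Sigma_\star\cap\partial M)^-\bigr),
\]
which is $(1)$ after dividing by $-2\pi$. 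For $(2)$ I would use formula $(4)$ of Theorem \ref{thm4.1}: with $s=+1$ the two singular-curvature integrals agree, the two geodesic-curvature boundary integrals agree, and $M^-=M^-_\star$ together with $K\,d\widehat A=K_\star\,d\widehat A_\star$ (Lemma \ref{lem3.4}$(1)$) makes the curvature integrals agree, leaving $\sum(2\alpha^+(p)-\pi)=\sum(2\alpha^+_\star(p)-\pi)$, which is $(2)$.

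For the case $K^{\mathrm{ext}}<0$ I again start from formulas $(3)$ and $(4)$ of Theorem \ref{thm4.1} but now with $s=-1$. In $(3)$ the substitution $\kappa_g\,ds=-\kappa_g^\star\,ds_\star$ combined with $\partial M\cap M^{\pm}=\partial M\cap M^{\mp}_\star$ makes the two boundary integrals on the two sides of $(3)$ equal (the sign from $s=-1$ and the swap of $M^+\leftrightarrow M^-$ cancel), so they drop out; then $\chi(M^+_\star)=\chi(M^-)$, $\chi(M^-_\star)=\chi(M^+)$, $\#S^{\pm}_\star=\#S^{\mp}$, and $\#(\Sigma_\star\cap\partial M)^{\pm}=\#(\Sigma\cap\partial M)^{\mp}$ turn $(3)$ of Theorem \ref{thm4.1} into
\[
-2\pi(\chi(M^+)-\chi(M^-))-2\pi(\#S^+-\#S^-)-\pi(\#(\Sigma\cap\partial M)^+-\#(\Sigma\cap\partial M)^-)
= 2\pi(\#S^+_\star-\#S^-_\star)+\pi(\#(\Sigma_\star\cap\partial M)^+-\#(\Sigma_\star\cap\partial M)^-),
\]
i.e.\ $(3)$ of the theorem after rearranging and dividing by $-2\pi$. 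For $(4)$ I use formula $(4)$ of Theorem \ref{thm4.1} with $s=-1$: the singular-curvature integrals pick up a sign so $2\int_\Sigma\kappa_s\,ds = -2\int_{\Sigma_\star}\kappa_s^\star\,ds_\star$, the $\partial M$ geodesic-curvature integrals likewise become negatives of each other, and since $M^-=M^+_\star$, $M^-_\star=M^+$, writing $\int_{M^-}Kd\widehat A = \int_M Kd\widehat A - \int_{M^+}Kd\widehat A$ and using $\int_M Kd\widehat A = 2\pi\chi(M) + \dots$ from the Gauss--Bonnet formulas---more directly, I would instead sum formulas $(1)$ and $(2)$ of Theorem \ref{thm4.1} (with the $s=-1$ dictionary) to eliminate the curvature integrals entirely, since $(1)$ gives $4\pi\chi(M^-_\star)=4\pi\chi(M^+)$ and $(2)$ gives $4\pi\chi(M^-)$, and adding them against the Euler-characteristic decomposition $\chi(M)=\chi(M^+)+\chi(M^-)+\tfrac12\#(\Sigma\cap\partial M)$ collapses everything to $4\pi\chi(M)+\sum(2\alpha^+(p)-\pi)+\sum(2\alpha^+_\star(p)-\pi)=0$. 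The main obstacle I anticipate is not any single computation but the careful bookkeeping of how the $+$/$-$ decorations, the interior-angle sums $\alpha^+$, and the null-boundary-point contributions transform under the sign $s$ --- in particular verifying the relation $\alpha^+(p)+\alpha_\star^+(p)=\pi$ at null boundary points when $s=-1$ via Proposition \ref{prop2.14}, and checking that a null boundary point for $\varphi$ is again null for $\psi$ (which follows because the null direction is preserved up to the $M^{\pm}$-side identification, and "null" is the side-independent case). Once that dictionary is pinned down, all four formulas fall out by linear algebra on the four identities of Theorem \ref{thm4.1}.
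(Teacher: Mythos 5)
Your overall strategy for $(1)$--$(3)$ is the paper's: substitute the identities of Lemma \ref{lem5.1} (together with Lemma \ref{lem3.4}) into formulas $(3)$ and $(4)$ of Theorem \ref{thm4.1} and split into the cases $\mathrm{sgn}(K^{\mathrm{ext}})=\pm1$; your derivations of $(1)$ and $(2)$ are exactly the paper's. However, your ``dictionary'' goes well beyond what Lemma \ref{lem5.1} gives, and the extra entries are unjustified and in general false. Lemma \ref{lem5.1} only yields $\Sigma=\Sigma_\star$, $M^{\pm}=M^{\pm s}_\star$, $\kappa_s\,ds=s\,\kappa_s^\star\,ds_\star$ and $\kappa_g\,ds=s\,\kappa_g^\star\,ds_\star$; it says nothing about the second-kind classifications or the interior angles. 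The positive/null/negative labels depend on the null directions, and $\ker\varphi_p$ and $\ker\psi_p$ need not coincide (for a front bundle they are \emph{never} equal at a common singular point), so a null boundary point of $\varphi$ (where $\ker\varphi$ is tangent to $\partial M$) is typically not null for $\psi$, and a second-kind point of $\varphi$ need not be second-kind for $\psi$ --- the hypothesis only transfers \emph{first}-kind points of $f$ to first-kind points of $\nu$. Hence $\#S^{\pm}_\star=\#S^{\pm s}$, $(\Sigma_\star\cap\partial M)^{\pm}=(\Sigma\cap\partial M)^{\pm s}$, ``null stays null'', $\alpha^+=\alpha^+_\star$, $\alpha^++\alpha^+_\star=\pi$ are not available (and if they were, $(1)$--$(3)$ would be trivial pointwise identities rather than consequences of the integral formulas). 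The fix is simply to \emph{keep} the $\star$-counts as independent quantities, as the paper does: for $s=-1$, formula $(3)$ of Theorem \ref{thm4.1} reduces, after the boundary integrals cancel and $\chi(M^{\pm}_\star)=\chi(M^{\mp})$, to $-4\pi\bigl(\chi(M^+)-\chi(M^-)\bigr)-2\pi(\#S^+-\#S^-)-\pi\bigl(\#(\Sigma\cap\partial M)^+-\#(\Sigma\cap\partial M)^-\bigr)=-2\pi(\#S^+_\star-\#S^-_\star)-\pi\bigl(\#(\Sigma_\star\cap\partial M)^+-\#(\Sigma_\star\cap\partial M)^-\bigr)$, which is $(3)$; your displayed intermediate equation drops the $2\pi\bigl(\chi(M^+)-\chi(M^-)\bigr)$ term and has the wrong signs on the $\star$-terms, so it does not give $(3)$ upon dividing by $-2\pi$.

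The derivation of $(4)$ is a genuine gap. Summing formulas $(1)$ and $(2)$ of Theorem \ref{thm4.1} with the $s=-1$ substitutions does \emph{not} ``collapse everything'': the singular-curvature and full-boundary terms cancel, but you are left with $-2\int_M K\,d\widehat A$, the term $2\bigl(\int_{\partial M\cap M^-}\kappa_g\,ds-\int_{\partial M\cap M^+}\kappa_g\,ds\bigr)$, and all the singular-point counts; eliminating these requires an additional application of the signed Gauss--Bonnet formula (Theorem \ref{thm3.5} $(2)$) together with the already-proved $(3)$ and careful count bookkeeping --- none of which appears in your plan. The paper's route is shorter and you had already started it before switching: substitute Lemma \ref{lem5.1} into Theorem \ref{thm4.1} $(4)$ with $s=-1$, move the curvature terms to one side and use $-2\int_{M^-}K\,d\widehat A+2\int_{M^+}K\,d\widehat A=2\int_M K\,dA$, obtaining
\begin{equation*}
4\int_{\Sigma}\kappa_s\,ds+2\int_{\partial M}\kappa_g\,ds+2\int_M K\,dA-\sum_{p\in(\Sigma\cap\partial M)^{\mathrm{null}}}\bigl(2\alpha^+(p)-\pi\bigr)=-\sum_{p\in(\Sigma_\star\cap\partial M)^{\mathrm{null}}}\bigl(2\alpha^+_\star(p)-\pi\bigr),
\end{equation*}
and then apply the unsigned Gauss--Bonnet formula, Theorem \ref{thm3.5} $(1)$, to the left-hand side; this immediately yields $4\pi\chi(M)+\sum\bigl(2\alpha^+(p)-\pi\bigr)+\sum\bigl(2\alpha^+_\star(p)-\pi\bigr)=0$. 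You should complete $(4)$ along these lines and purge the unproven classification dictionary from the argument.
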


\begin{proof}

By Example $\ref{ex3.2}$, we obtain the frontal bundle $(M,\mathcal{E}_f,\langle\cdot,\cdot\rangle,D,\varphi,\psi)$ over $M$. Hence, $(3)$ and $(4)$ of Theorem $\ref{thm4.1}$ hold.

By Lemma $\ref{lem3.4}$ and Lemma $\ref{lem5.1}$, $(3)$ and $(4)$ of Theorem $\ref{thm4.1}$ are reduced, respectively, as
\begin{eqnarray}
&&\int_{\partial{M}\cap M^+}\kappa_gds-\int_{\partial{M}\cap M^-}\kappa_gds\nonumber\\
&&\ \ \ -2\pi\left(\chi(M^+)-\chi(M^-)\right)-2\pi\left(\#S^+-\#S^-\right)\nonumber\\
&&\ \ \ -\pi\left(\#(\Sigma\cap\partial{M})^+-\#(\Sigma\cap\partial{M})^-\right)\nonumber\\
&&=\mathrm{sgn}(K^{\mathrm{ext}})\left(\int_{\partial{M}\cap M^{\mathrm{sgn}(K^{\mathrm{ext}})}}\kappa_gds-\int_{\partial{M}\cap M^{-\mathrm{sgn}(K^{\mathrm{ext}})}}\kappa_gds\right)\nonumber\\
&&\ \ \ -2\pi\left(\chi(M^{\mathrm{sgn}(K^{\mathrm{ext}})})-\chi(M^{-\mathrm{sgn}(K^{\mathrm{ext}})})\right)-2\pi\left(\#S^+_\star-\#S^-_\star\right)\nonumber\\
&&\ \ \ -\pi\left(\#(\Sigma_\star\cap\partial{M})^+-\#(\Sigma_\star\cap\partial{M})^-\right),\label{5.15}\\
&&2\int_{\Sigma}\kappa_sds+\int_{\partial{M}}\kappa_gds-2\int_{M^-}Kd\widehat{A}\nonumber\\
&&\ \ \ -\sum_{p\in(\Sigma\cap\partial{M})^{\mathrm{null}}}\left(2\alpha^+(p)-\pi\right)\nonumber\\
&&=\mathrm{sgn}(K^{\mathrm{ext}})\left(2\int_\Sigma\kappa_sds+\int_{\partial{M}}\kappa_gds\right)-2\int_{M^{-\mathrm{sgn}(K^{\mathrm{ext}})}}Kd\widehat{A}\nonumber\\
&&\ \ \ -\sum_{p\in(\Sigma_\star\cap\partial{M})^{\mathrm{null}}}\left(2\alpha^+_\star(p)-\pi\right).\label{5.16}
\end{eqnarray}

If $K^{\mathrm{ext}}>0$, then $(\ref{5.15})$ and $(\ref{5.16})$ lead to $(1)$ and $(2)$, respectively.

We suppose that $K^{\mathrm{ext}}<0$. Then, $(\ref{5.15})$ leads to $(3)$. On the other hand, $(\ref{5.16})$ is expressed as
\begin{eqnarray}
&&4\int_{\Sigma}\kappa_sds+2\int_{\partial{M}}\kappa_gds+2\int_MKdA-\sum_{p\in(\Sigma\cap\partial{M})^{\mathrm{null}}}\left(2\alpha^+(p)-\pi\right)\nonumber\\
&&=-\sum_{p\in(\Sigma_\star\cap\partial{M})^{\mathrm{null}}}\left(2\alpha^+_\star(p)-\pi\right).\label{5.17}
\end{eqnarray}
By applying $(1)$ of Theorem $\ref{thm3.5}$ to $(\ref{5.17})$, we obtain $(4)$.\qed
\end{proof}

In Theorem $\ref{thm5.2}$, if $M$ has no boundary, we obtain the following assertion.

\begin{corollary}[\mbox{\cite[Theorem $3.28$, $3.30$]{4}}]
\label{cor5.3}

{\it Under the assumption of Theorem $\ref{thm5.2}$, we suppose that $M$ has no boundary. Then, if $K^{\mathrm{ext}}>0$, then we have
\begin{eqnarray*}
\#S^+-\#S^-=\#S^+_\star-\#S^-_\star.\label{5.18}
\end{eqnarray*}
On the other hand, if $K^{\mathrm{ext}}<0$, then we have
\begin{gather*}
2\left(\chi(M^+)-\chi(M^-)\right)+\#S^+-\#S^-=\#S^+_\star-\#S^-_\star,\nonumber\\
\chi(M)=0.\label{5.19}
\end{gather*}}
\end{corollary}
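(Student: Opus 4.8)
The plan is to specialize Theorem \ref{thm5.2} to the boundaryless case by observing that, when $\partial M=\emptyset$, every term in the four formulas of Theorem \ref{thm5.2} that involves the boundary $\partial M$ or a singular point lying on $\partial M$ simply vanishes. Concretely, the sets $\Sigma\cap\partial M$ and $\Sigma_\star\cap\partial M$ are empty, so all quantities of the form $\#(\Sigma\cap\partial M)^{+}$, $\#(\Sigma\cap\partial M)^{-}$, $\#(\Sigma\cap\partial M)^{\mathrm{null}}$, and their $\star$-counterparts are zero, and the summations $\sum_{p\in(\Sigma\cap\partial M)^{\mathrm{null}}}(2\alpha^{+}(p)-\pi)$ and $\sum_{p\in(\Sigma_\star\cap\partial M)^{\mathrm{null}}}(2\alpha^{+}_\star(p)-\pi)$ are empty sums, hence zero as well.

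First I would invoke Theorem \ref{thm5.2} directly: its hypotheses ($f$ and $\nu$ admitting only singular points of the first kind and admissible singular points of the second kind, the transversality of $\Sigma$ and $\Sigma_\star$ to $\partial M$, the boundedness of $\log|K^{\mathrm{ext}}|$, the first-kind compatibility of $f$ and $\nu$, and the condition on $\mathrm{II}$) are exactly the assumptions we are carrying over, with the transversality and the $\partial M$-condition on $\mathrm{II}$ being vacuous now. Then, in the case $K^{\mathrm{ext}}>0$, formula $(1)$ of Theorem \ref{thm5.2} reads $\#S^{+}-\#S^{-}+0=\#S^{+}_\star-\#S^{-}_\star+0$, which is the first claimed identity; formula $(2)$ of Theorem \ref{thm5.2} becomes $0=0$ and carries no content. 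In the case $K^{\mathrm{ext}}<0$, formula $(3)$ of Theorem \ref{thm5.2} reads $2(\chi(M^{+})-\chi(M^{-}))+\#S^{+}-\#S^{-}+0=\#S^{+}_\star-\#S^{-}_\star+0$, giving the first identity of the negative case; and formula $(4)$ of Theorem \ref{thm5.2} becomes $4\pi\chi(M)+0+0=0$, i.e. $\chi(M)=0$, giving the second identity.

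There is essentially no obstacle here: the corollary is a pure reading-off of the general statement under the simplification $\partial M=\emptyset$. The only thing one must be mildly careful about is to confirm that the set $S^{+}$ of positive singular points of the second kind in $\Sigma\setminus\partial M$ coincides, when $\partial M=\emptyset$, with the set of all positive singular points of the second kind in $\Sigma$ (and likewise for $S^{-}$, $S^{+}_\star$, $S^{-}_\star$), so that the formulas match the earlier statements in \cite{4} verbatim; this is immediate from the definitions in \S\ref{sec2}. Thus the proof consists of applying Theorem \ref{thm5.2} and discarding the now-vacuous boundary terms.

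\begin{proof}
Since $M$ has no boundary, we have $\partial M=\emptyset$, hence $\Sigma\cap\partial M=\Sigma_\star\cap\partial M=\emptyset$. Consequently
\[
\#(\Sigma\cap\partial{M})^{+}=\#(\Sigma\cap\partial{M})^{-}=\#(\Sigma_\star\cap\partial{M})^{+}=\#(\Sigma_\star\cap\partial{M})^{-}=0,
\]
and the sums $\sum_{p\in(\Sigma\cap\partial{M})^{\mathrm{null}}}\left(2\alpha^+(p)-\pi\right)$ and $\sum_{p\in(\Sigma_\star\cap\partial{M})^{\mathrm{null}}}\left(2\alpha^+_\star(p)-\pi\right)$ are empty, hence equal to $0$. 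All the hypotheses of Theorem \ref{thm5.2} are satisfied (the transversality of $\Sigma,\Sigma_\star$ to $\partial M$ and the condition on $\mathrm{II}$ along $\partial M\backslash\Sigma$ being vacuous), so we may apply its conclusions. If $K^{\mathrm{ext}}>0$, formula $(1)$ of Theorem \ref{thm5.2} reduces to $\#S^+-\#S^-=\#S^+_\star-\#S^-_\star$, while formula $(2)$ reduces to the trivial identity $0=0$. If $K^{\mathrm{ext}}<0$, formula $(3)$ of Theorem \ref{thm5.2} reduces to $2\left(\chi(M^+)-\chi(M^-)\right)+\#S^+-\#S^-=\#S^+_\star-\#S^-_\star$, and formula $(4)$ reduces to $4\pi\chi(M)=0$, that is, $\chi(M)=0$. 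This proves all the asserted formulas.\qed
\end{proof}
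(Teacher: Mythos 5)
Your proof is correct and coincides with the paper's treatment: the corollary is stated there precisely as the specialization of Theorem \ref{thm5.2} to $\partial M=\emptyset$, where all boundary counts, boundary integrals, and the angle sums vanish, so formulas $(1)$, $(3)$, $(4)$ reduce to the claimed identities (and $(2)$ becomes vacuous). Reading off the formulas after discarding the empty boundary terms, as you do, is exactly the intended argument.
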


\end{document}